\newtheorem{theorem}{Theorem}[section]
\newtheorem{corollary}[theorem]{Corollary}
\newtheorem{fact}[theorem]{Fact}
\newtheorem{proposition}[theorem]{Proposition}
\theoremstyle{definition}
\newtheorem{remark}[theorem]{Remark}
\newtheorem{example}[theorem]{Example}
\numberwithin{equation}{section}
\newcommand{\abs}[1]{{\lvert {#1}\rvert}}
\newcommand{\norm}[1]{{\lVert {#1}\rVert}}
\newcommand{\scal}[2]{{\left\langle{{#1}\,\vert\,{#2}}\right\rangle}}
\newcommand{\pair}[2]{{\left\langle{{#1},{#2}}\right\rangle}}
\DeclareMathOperator{\argmin}{argmin}
\DeclareMathOperator{\dom}{dom}
\DeclareMathOperator{\sign}{sign}
\newcommand{\bb}{\mathcal{B}}
\newcommand{\XC}{\mathcal{X}}
\newcommand{\XD}{\mathcal{D}}
\newcommand{\YC}{\mathcal{Y}}
\newcommand{\HH}{\ensuremath{{\EuScript{H}}}}
\newcommand{\FF}{\ensuremath{{\EuScript{F}}}}
\newcommand{\B}{\mathcal{B}}
\newcommand{\F}{\mathcal{F}}
\newcommand{\PP}{\ensuremath{ P}}
\newcommand{\loss}{\ensuremath{L}}
\newcommand{\prox}{\mathrm{prox}}
\newcommand{\R}{\mathbb{R}}
\newcommand{\C}{\mathbb{C}}
\newcommand{\KK}{\mathbb{K}}
\newcommand{\XX}{\ensuremath{{\mathsf X}}}
\newcommand{\KKK}{\ensuremath{{\mathsf K}}}
\newcommand{\RP}{\ensuremath{{\mathbb R}_+}}
\newcommand{\RX}{\ensuremath{\left]-\infty, +\infty\right]}}
\newcommand{\RPP}{\ensuremath{{\mathbb R}_{++}}}
\newcommand{\N}{\mathbb{N}}
\newcommand{\pinf}{\ensuremath{{+\infty}}}
\newcommand{\ud}{\ensuremath{\,{\mathrm d}}}
\newcommand{\emp}{\ensuremath{{\varnothing}}}
\newcommand{\ii}{j}
\begin{document}

\title{ {\bf 
Generalized support vector regression:
duality and tensor-kernel representation.
}} 

\author{Saverio Salzo$^1$ and Johan A.K. Suykens$^2$\\[3mm]
\small
\small $\!^1$LCSL, Istituto Italiano di Tecnologia and
Massachusetts Institute of Technology\\
\small Via Morego 30, 16163 Genova, Italy\\
\small Email: saverio.salzo@iit.it\\[3mm]
\small
\small $\!^2$KU Leuven, ESAT-STADIUS\\
\small Kasteelpark Arenberg 10, B-3001 Leuven (Heverlee), Belgium \\
\small Email: johan.suykens@esat.kuleuven.be
}

\date{}
\maketitle
{\abstract 
In this paper we study the variational problem associated to 
support vector regression in Banach function spaces. 
Using the Fenchel-Rockafellar duality theory, we give
explicit formulation of the dual problem as well as
of the related optimality conditions. Moreover, we 
provide a new computational framework for solving the problem 
which relies on a tensor-kernel representation. This analysis 
overcomes the typical difficulties connected to learning in Banach spaces. 
We finally present a large class of tensor-kernels to which
our theory fully applies: power series tensor kernels.
This type of kernels describe Banach spaces of analytic functions
and include generalizations of the  exponential and polynomial kernels 
as well as, in the complex case, generalizations of the Szeg\"o
and Bergman kernels.
}

{\small {\bf Keywords:}
support vector regression, regularized empirical risk, 
reproducing kernel Banach spaces, tensors, 
Fenchel-Rockafellar duality. 
}

\section{Introduction}

Support vector regression is a kernel-based estimation
technique which allows to estimate a function belonging to 
an infinite dimensional function space based on a finite number
of pointwise observations \cite{Cri00,SteChi2008,Suy2002,Vap98}. 
The (primal) problem is classically formulated as  an empirical risk minimization
 on a reproducing kernel Hilbert space of functions,
the regularization term being the square of the Hilbert norm.
This infinite dimensional optimization problem
is approached through its dual problem which turns out to be
finite dimensional, quadratic (possibly constrained), and 
involving the kernel function only, 
evaluated at the available data points \cite{Cri00,SteChi2009,Vap98}.
Therefore, the knowledge of the kernel suffices to completely
describe and solve the dual problem as well as to compute the solution 
of the primal (infinite dimensional) problem.
This is what it is known as the \emph{kernel trick} and makes
support vector regression effective and so popular in applications \cite{SteChi2008}.
 
Learning in Banach spaces of functions is an emerging area of research 
which in principle permits to consider learning problems with more general 
types of norms than Hilbert norms \cite{Com15,Fass15,Zhang2009}.
The main motivation for this generalization comes from the 
need of finding more effective sparse representations of data
or for feature selection.
To that purpose, several types of alternative regularization schemes have been
proposed in the literature, and we mention, among  others, 
$\ell^1$ regularization (lasso), elastic net, and bridge regression \cite{Demo09,Fu1998}.
Moreover, the statistical consistency of such more general regularization schemes
have been addressed in \cite{Com15,Com15b,Demo09,Kol2009}.
However, moving to Banach spaces of functions
and Banach norms pose serious difficulties from the computational point of view
\cite{Sri11}. Indeed, even though, in this more general setting, 
it is still possible to introduce appropriate
reproducing kernels \cite{Zhang2009}, they fail to properly
represent the solution of the dual and primal problem,
so that the dual approach becomes cumbersome.
For this reason, the above mentioned estimation techniques
are often implemented by directly tackling  the primal problem and therefore, 
as a matter of fact, reduces to a finite dimensional estimation methods
(that is to parametric models).

In this work we address support vector regression in Banach function spaces 
and we provide a new computational framework for solving the associated 
optimization problem,
overcoming the difficulties we discussed above.
Our model is described in the primal
by means of an appropriate feature map in Banach spaces
of features and a general regularizer. 
We first study, in great generality,
the interplay between 
the primal and the dual problem
through the Fenchel-Rockafellar duality.
We obtain an explicit formulation of the 
dual problem, as well as of the related optimality
conditions, in terms of the feature map and the subdifferentials of the 
loss function and of the regularizer. As a byproduct we also provide a
general representer theorem.

Next, we consider the setting of a linear model described through
a countable dictionary of functions with the
regularization term being a function of the $\displaystyle\ell^r$-norm of the related coefficients,
 with $r=m/(m-1)$ and $m$ an even integer. This choice 
 allows $r>1$ to be close to $1$ and hence to 
 approximate $\ell^1$ regularization, possibly 
keeping the stability properties of the $\ell^2$ regularization
based estimation. 
Then we introduce a new type of kernel function
which turns to be a symmetric positive-definte tensor of order $m$,
and we prove that it
allows to formulate the dual problem without any reference
to the underlying feature map as well as to evaluate the optimal
regression function at any point in the input space.
In this way, the dual problem becomes a finite dimensional 
convex homogeneous $m$-degree-polynomial minimization problem
which can be solved by standard smooth optimization algorithms,
e.g., Newton-type methods.
In the end, we show that the kernel trick
can be fully extended to \emph{tensor-kernels} and makes
the dual approach in the Banach setting still viable for computing the solution of 
the primal (infinite dimensional) problem. 
Finally, we illustrate the theoretical framework above by
presenting an entire class of tensor-kernel
functions, that is  \emph{power series tensor-kernels},
which are extensions of the analogue matrix-type power series kernels considered in \cite{Zwi2009}. We show that this
class includes kernels of exponential and polynomial
type as well as, in the complex case, generalizations of the Szeg\"o
and Bergman kernels. 

The rest of the paper is organized as follows. Section~\ref{sec:basic}
gives basic definitions and facts. Section~\ref{sec:GenSVR}
presents the dual framework for SVR in general Banach
spaces of features. In Section~\ref{sec:kernrep} we introduce
 tensor kernels and explain their role in making 
 Banach space problems more practical numerically.
 Section~\ref{secPSTK} treats tensor kernels of power series type,
 which give rise to a general class of function Banach spaces to
 which the theory applies.
 Finally Section~\ref{sec:con} contains conclusions.

\section{Basic definitions and facts}
\label{sec:basic}

Let $\F$ be a real  Banach space. We denote by $\F^*$ its dual space 
and by $\pair{\cdot}{\cdot}$ the canonical pairing between $\F$ and $\F^*$,
meaning that, for every $(w,w^*) \in \F\times\F^*$, $\pair{w}{w^*} 
= w^*(w)$. We denote by $\norm{\cdot}$ the norm of $\FF$ as well as
the norm of $\FF^*$.
Let $F\colon\F\to\RX$. The \emph{domain of $F$} is 
$\dom F=\{ w\in \F~\vert~F(w)<\pinf\}$
and $F$ is 
\emph{proper} if $\dom F\neq\emp$. 
Suppose that $F$ is proper and convex.
The \emph{subdifferential} of $F$ is the 
set-valued operator $\partial F\colon\F\to 2^{\F^*}$ such that,
\begin{equation*}
(\forall\, w\in \F)\quad 
\partial F(w) =\big\{  w^*\in \F^*~\big\vert~(\forall v\in\F)\; F(w)+\pair{v-w}{w^*}\leq F(v) \big\},
\end{equation*}
and its domain is $\dom\partial F=
\{w\in \F~\vert~\partial F(w)\neq\emp \}$.
The \emph{Fenchel conjugate} of $F$ is the function
$F^*\colon \F^* \to \RX\colon w^* \in \F^* \mapsto \sup_{w \in \F} \pair{w}{w^*} - F(w)$.
We denote by $\Gamma_0(\F)$ the set of proper, convex, and lower semicontinuous
functions on $\F$. 
If $C \subset \F$, we denote by $\iota_C$ the 
\emph{indicator function} of $C$,
that is $\iota_C \colon \F \to \RX$, such that, for every $w \in \F$,
$\iota_C(w) = 0$ if $w \in C$, and $\iota_C(w) = \pinf$ if $w \notin C$.
Let $F \in \Gamma_0(\F)$. Then the following  
duality relation between the subdifferentials of $F$ and its conjugate $F^*$ holds
\cite[Theorem~2.4.4(iv)]{Zali02}
\begin{equation}
\label{eq:20160302a}
(\forall\, (w,w^*) \in \F\times\F^*)\qquad w^* \in \partial F(w)\ \Leftrightarrow\ w \in \partial F^*(w^*).
\end{equation}

Let $r \in \left[1,+\infty\right[$. The \emph{conjugate} exponent
of $r$ is $r^* \in \left]1,+\infty\right]$ such that $1/r + 1/r^* = 1$.
If $(\mathcal{Z},\mathfrak{A}, \mu)$ is a finite measure space, 
we denote by $\pair{\cdot}{\cdot}_{r,r^*}$ the canonical pairing between
the Lebesgue spaces
$\displaystyle L^r(\mu)$ and $L^{r^*}\!(\mu)$, i.e., $\pair{f}{g}_{r,r^*} = \int_{\mathcal{Z}} f g \ud \mu$.
If $\KK$ is a countable set, we define the sequence space
\begin{equation*}
\ell^r(\KK) = \bigg\{(w_k)_{k \in \KK} \in 
\R^{\KK}~\bigg\vert~\sum_{k \in \KK}\abs{w_k}^r<+\infty\bigg\}
\end{equation*}
endowed with the norm $\norm{w}_r = \big( \sum_{k \in \KK}\abs{w_k}^r \big)^{1/p}$.
The pairing between $\displaystyle\ell^r(\KK)$ and 
$\displaystyle\ell^{r^*}\!(\KK)$ is 
$\pair{w}{w^*}_{r,r^*} = \sum_{k \in \KK} w_k w_k^*$.

The Banach space $\F$ is called \emph{smooth} \cite{Ciora90} if,
for every $w \in \F$ there exists a unique $w^* \in \F^*$ such that 
$\norm{w^*}=1$ and $\pair{w}{w^*} = 1$. The smoothness property
is equivalent to the G\^ateaux differentiability of the norm on $\F\setminus\{0\}$.
We say that $\F$ is \emph{strictly convex} if, for every $w$ and every $v$ in $\F$
such that $\norm{w}=\norm{v}=1$ and $w\neq v$, one has $\norm{(w+v)/2} <1$.
Let $\F$ be a reflexive, strictly convex and smooth real Banach space
and let $r \in \left]1,+\infty\right[$.
Then the \emph{$r$-duality map} of $\F$  is the mapping \cite{Ciora90}
\begin{equation}
\label{e:JJJ}
J_r \colon \F \to \F^*\text{ such that }(\forall\, w \in \F)
\quad \pair{w}{J_r(w)}=\norm{w}^r\quad \text{and}\quad
\norm{J_r(w)}=\norm{w}^{r-1}.
\end{equation}
This map is a bijection from $\F$ onto $\F^*$
and its inverse is the $r^*$-duality map of $\F^*$.
Moreover, for every $w \in \F$ and every $\lambda \in \RP$, $J_{r}(\lambda w) 
= \lambda^{r-1} J_{r}(w)$ and $J_{r}(-w) = - J_{r}(w)$.
The mapping $J_2$ is called the \emph{normalized duality map} of $\F$.
The Banach space $\displaystyle\ell^r(\KK)$
is reflexive, strictly convex, and smooth, and, it is
immediate to verify from \eqref{e:JJJ} that, its $r$-duality map is
\begin{equation}
\label{eq:dualitymapellr}
J_{r}\colon \ell^r(\KK)\to \ell^{r^*}\!(\KK)\colon w=(w_k)_{k\in\KK}
\mapsto 
(\abs{w_k}^{r-1} \sign(w_k))_{k\in\KK}.
\end{equation}
Moreover, $\displaystyle J_{r}^{-1}\colon \ell^{r^*}\!(\KK) \to \ell^{r}(\KK)$ is
the ${r^*}$-duality map of $\displaystyle \ell^{r^*}\!(\KK)$, hence it has the
same form as \eqref{eq:dualitymapellr} with $r$ replaced by $r^*$.

\begin{fact}[{\cite[Example~13.7]{Livre1}}]
\label{fact1}
Let $\F$ be a reflexive, strictly convex, smooth, and real Banach space, let $r \in \left]1,+\infty\right[$,
and let $\varphi \in \Gamma_0(\R)$ be even. Then
$(\varphi \circ \norm{\cdot})^* = \varphi^* \circ \norm{\cdot}$ and
\begin{equation*}
(\forall\, w \in \F)\qquad
\partial (\varphi \circ \norm{\cdot})(w) =
\begin{cases}
\dfrac{\partial \varphi(\norm{w})}{\norm{w}^{r-1}}J_r(w)&\text{if } w \neq 0\\[2ex]
\{w^* \in \F^*~\vert~\norm{w^*} \in \partial \varphi(0)\}&\text{if } w = 0.
\end{cases}
\end{equation*}
\end{fact}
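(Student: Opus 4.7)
The strategy is to establish the conjugate formula first and then deduce the subdifferential via the Fenchel--Young equality $w^*\in\partial F(w)\Leftrightarrow F(w)+F^*(w^*)=\pair{w}{w^*}$, which is equivalent to \eqref{eq:20160302a}. For the conjugate, I would parametrize $w\in\F$ by $w=ru$ with $r=\norm{w}\ge 0$ and $\norm{u}=1$, so that $\sup_{\norm{u}=1}\pair{u}{w^*}=\norm{w^*}$, giving
\[
(\varphi\circ\norm{\cdot})^*(w^*)=\sup_{r\ge 0}\bigl[r\norm{w^*}-\varphi(r)\bigr].
\]
Evenness of $\varphi$ implies that $\varphi$ is nondecreasing on $[0,+\infty)$ and that $\varphi^*$ is even; the scalar definition $\varphi^*(s)=\sup_{r\in\R}[rs-\varphi(r)]$ can then, for $s\ge 0$, be restricted to $r\ge 0$ (since for $r<0$ one has $rs\le 0\le |r|s$ and $\varphi(-r)=\varphi(r)$), yielding $\varphi^*(\norm{w^*})=\sup_{r\ge 0}[r\norm{w^*}-\varphi(r)]$ and therefore the claimed identity $(\varphi\circ\norm{\cdot})^*=\varphi^*\circ\norm{\cdot}$.

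For the subdifferential at $w\neq 0$, the Fenchel--Young characterization reads $\varphi(\norm{w})+\varphi^*(\norm{w^*})=\pair{w}{w^*}$. Sandwich this identity as
\[
\pair{w}{w^*}\le \norm{w}\norm{w^*}\le \varphi(\norm{w})+\varphi^*(\norm{w^*}),
\]
where the second inequality is the scalar Fenchel--Young inequality applied at the nonnegative reals $\norm{w}$ and $\norm{w^*}$. Simultaneous equality forces (a) $\pair{w}{w^*}=\norm{w}\norm{w^*}$ and (b) $\norm{w^*}\in\partial\varphi(\norm{w})$. By smoothness of $\F$, the unit functional $w^*_0\in\F^*$ attaining the norm at $w/\norm{w}$ is unique, so (a) forces $w^*=\norm{w^*}\,w^*_0$; and from \eqref{e:JJJ} the element $\norm{w}^{p-1}w^*_0$ satisfies $\pair{w}{\cdot}=\norm{w}^p$ and has norm $\norm{w}^{p-1}$, so by uniqueness of the $p$-duality map $J_p(w)=\norm{w}^{p-1}w^*_0$, i.e. $w^*_0=J_p(w)/\norm{w}^{p-1}$. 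Combined with (b), every $w^*\in\partial(\varphi\circ\norm{\cdot})(w)$ has the form $(t/\norm{w}^{p-1})\,J_p(w)$ with $t=\norm{w^*}\in\partial\varphi(\norm{w})$, and conversely every such element lies in the subdifferential, which is the announced set equality.

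At $w=0$, condition (a) is automatic (both sides vanish) and the Fenchel--Young equality reduces to $\varphi(0)+\varphi^*(\norm{w^*})=0$, i.e. $\norm{w^*}\in\partial\varphi(0)$, which is the second case. The main delicate point I expect is the sign bookkeeping when matching the scalar subdifferential of $\varphi$ with the norm of $w^*$: since $\norm{w^*}\ge 0$, one must know that every element of $\partial\varphi(\norm{w})$ (for $\norm{w}>0$) is itself nonnegative. This is a consequence of $\varphi$ being even and convex, hence nondecreasing on $[0,+\infty)$, so that no spurious negative $t$'s appear that would leave the decomposition $w^*=(t/\norm{w}^{p-1})J_p(w)$ ambiguous. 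This monotonicity, together with smoothness and strict convexity of $\F$ ensuring that $J_p$ is single-valued and bijective, are the two ingredients that make the ``magnitude times direction'' decomposition of $\partial(\varphi\circ\norm{\cdot})(w)$ rigorous.
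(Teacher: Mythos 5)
This statement is imported verbatim from \cite[Example~13.7]{Livre1} and the paper offers no proof of its own, so there is nothing internal to compare against; the only question is whether your argument is sound, and it is. Your route --- computing $(\varphi\circ\norm{\cdot})^*$ by the polar decomposition $\sup_{w}=\sup_{r\geq 0}\sup_{\norm{u}=1}$ together with evenness of $\varphi$, then reading off the subdifferential from the Fenchel--Young equality sandwiched between $\pair{w}{w^*}\leq\norm{w}\,\norm{w^*}$ and the scalar Fenchel--Young inequality --- is the standard derivation and is essentially the one in the cited source. You also correctly isolate the two genuinely delicate points: that every element of $\partial\varphi(s)$ for $s>0$ is nonnegative (from $\xi s\geq\varphi(s)-\varphi(0)\geq 0$, using that an even convex $\varphi$ is nondecreasing on $\RP$), and that smoothness makes the norming functional of $w/\norm{w}$ unique so that $J_p(w)=\norm{w}^{p-1}w_0^*$ and the ``magnitude times direction'' factorization of $w^*$ is unambiguous. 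The only cosmetic remark is that the case $w\neq 0$, $w^*=0$ should be noted as consistent with the formula (it corresponds to $t=0\in\partial\varphi(\norm{w})$), and that the $J_p(x)$ in the displayed statement is a typo of the paper for $J_p(w)$, not an issue with your argument.
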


\begin{fact}[{Fenchel-Rockafellar duality \cite[Corollary 2.8.5 and Theorem~2.8.3(vi)]{Zali02}}]
\label{fact:FRduality}
Let $\F$ and $\B$ be two real Banach spaces. Let $F \in \Gamma_0(\F)$,
let $\Psi \in \Gamma_0(\B)$, and let $B\colon \F \to \B$
be a bounded linear operator. Suppose that $0 \in \mathrm{int}\big(B(\dom F) - \dom \Psi\big)$.
Then the dual problem
\begin{equation}
\label{eq:20160302c}
\min_{y^* \in \B^*} F^*(- B^* y^*) + \Psi^*(y^*)
\end{equation}
admits solutions and strong duality holds, that is
\begin{equation*}
\inf_{x \in \F} F(x) + \Psi(B x) = - \min_{y^* \in \B^*} F^*(- B^* y^*) + \Psi^*(y^*).
\end{equation*}
Moreover, 
for every $(\bar{x}, \bar{y}^* ) \in \F \times \B^*$, 
$\bar{x}$ is a minimizer for $F + \Psi\circ B$ and $\bar{y}^*$
is a solution of \eqref{eq:20160302c}  if and only if $-B^* \bar{y}^* \in \partial F(\bar{x})$ and 
$\bar{y}^* \in \partial \Psi (B \bar{x})$.
\end{fact}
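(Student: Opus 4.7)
My plan is to reduce the theorem to properties of a perturbation (value) function, following the classical recipe for Fenchel--Rockafellar duality. First, I would introduce
\[
v(y) = \inf_{x\in\F} \bigl(f(x) + g(Bx + y)\bigr),\qquad y\in\B,
\]
which is convex as the marginal of the jointly convex function $(x,y)\mapsto f(x)+g(Bx+y)$ and satisfies $v(0) = \inf_{x\in\F}(f + g\circ B)$. A direct substitution $z = Bx + y$ in the definition of the Fenchel conjugate will give the key identity
\[
v^*(y^*) = f^*(-B^*y^*) + g^*(y^*)\qquad (y^*\in\B^*),
\]
so that the dual problem \eqref{eq:20160302c} is precisely $\min v^*$. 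The Fenchel--Young inequality then yields $\argmin v^* = \partial v(0)$ as soon as their common value equals $-v(0)$, reducing both the strong-duality equality and the existence of a dual optimum to the single assertion $\partial v(0)\neq\emp$.

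Second, I would translate the constraint qualification. Since $\dom v = \dom g - B(\dom f)$ and the interior is preserved under negation, the hypothesis $0 \in \inter\bigl(B(\dom f) - \dom g\bigr)$ is equivalent to $0\in\inter(\dom v)$. A convex function that is bounded above on a neighbourhood of a point is automatically continuous, and hence subdifferentiable, there; this is the standard Banach-space convex analysis result I would invoke to conclude that $v$ is continuous at $0$ and $\partial v(0)\neq\emp$. The subtlety, which I expect to be the main obstacle, is ruling out $v(0) = -\pinf$: convexity forces a convex function that attains $-\pinf$ at an interior point of its domain to be identically $-\pinf$ on that interior, which contradicts the existence of any subgradient at $0$. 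Once this dichotomy is handled, the identification $\argmin v^* = \partial v(0)$ delivers the first half of the theorem.

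Third, for the optimality conditions I would assume a primal minimizer $\bar{x}$ and a dual optimum $\bar{y}^*$ exist and rewrite the strong-duality equality as
\[
\bigl[f(\bar{x}) + f^*(-B^*\bar{y}^*) - \pair{\bar{x}}{-B^*\bar{y}^*}\bigr] + \bigl[g(B\bar{x}) + g^*(\bar{y}^*) - \pair{B\bar{x}}{\bar{y}^*}\bigr] = 0.
\]
Each bracket is nonnegative by the Fenchel--Young inequality, so both must vanish, and the equality case of Fenchel--Young (together with \eqref{eq:20160302a}) then gives $-B^*\bar{y}^* \in \partial f(\bar{x})$ and $\bar{y}^* \in \partial g(B\bar{x})$. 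The converse is obtained by adding these two Fenchel--Young equalities and reading the chain backwards, so that the pair $(\bar{x},\bar{y}^*)$ achieves equality in the weak-duality bound and is therefore primal--dual optimal. The bulk of the work is thus the continuity of $v$ at $0$ and the exclusion of $v(0)=-\pinf$; the remaining steps are algebraic bookkeeping with conjugates and Fenchel--Young.
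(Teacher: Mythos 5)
The paper states this result as a Fact imported verbatim from Z\u{a}linescu's book and gives no proof of its own, so the only question is whether your argument is self-contained and correct. Your reduction to the value function $v$, the computation $v^*(y^*)=f^*(-B^*y^*)+g^*(y^*)$, the equivalence between dual optimality at value $-v(0)$ and $y^*\in\partial v(0)$, and the Fenchel--Young bookkeeping for the optimality conditions are all correct and constitute the standard route.

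The gap is in your second step. From $0\in\inter\big(B(\dom f)-\dom g\big)$ you correctly get $0\in\inter(\dom v)$, i.e.\ $v<\pinf$ on a neighbourhood of $0$; but the lemma you invoke requires $v$ to be \emph{bounded above} on a neighbourhood of $0$, and in an infinite-dimensional Banach space ``finite on an open set'' does not imply ``locally bounded above'' for a convex function that is not lower semicontinuous --- and the marginal function $v$ of an lsc integrand is in general not lsc. (A discontinuous linear functional is convex, finite everywhere, and has empty subdifferential at every point, so convexity plus finiteness near $0$ cannot by itself deliver $\partial v(0)\neq\emp$.) Closing this gap is precisely where the hypotheses $f\in\Gamma_0(\F)$, $g\in\Gamma_0(\B)$ and the completeness of $\F$ and $\B$ must enter: one shows by a Baire-category (open-mapping / Robinson--Ursescu type) argument that the closures of the sets $\big\{y\in\B : (\exists\, x\in\F)\ \norm{x}\le n,\ f(x)+g(Bx+y)\le n\big\}$ cover a neighbourhood of $0$, and then removes the closures using lower semicontinuity; this is the content of the results cited from \cite{Zali02}. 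Without that step the theorem is actually false (it fails if one drops either lower semicontinuity or completeness), so this is a missing idea rather than a routine omission. The remainder of your argument needs no change once continuity of $v$ at $0$ is secured.
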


\begin{fact}[{Generalized Cauchy-Schwarz inequality \cite[Corollary~2.11.5]{Bog07}}]
\label{p:20150525b}
Let $\KK$ be a nonempty set. Let $m \in \N$ and let $a_1, a_2, \dots, a_m \in \ell_+^m(\KK)$.
Then $a_1 a_2 \cdots a_m \in \ell_+^1(\KK)$ and
\begin{equation*}
\sum_{k \in \KK} a_1[k] a_2[k] \cdots a_m[k] \leq \bigg(\sum_{k \in \KK} a_1[k]^m\bigg)^{1/m}
\bigg(\sum_{k \in \KK} a_2[k]^m\bigg)^{1/m} \cdots \bigg(\sum_{k \in \KK} a_m[k]^m\bigg)^{1/m}.
\end{equation*}
\end{fact}

\section{General SVR
in Banach spaces of features.}
\label{sec:GenSVR}

Support vector regression aims at learning a nonlinear relation between 
an input space $\XC$ and an output space $\YC\subset \R$ based on a given
set of input-output pairs $(x_i,y_i)_{1 \leq i \leq n} \in (\XC\times\YC)^n$,  
called the \emph{training set}. The input-output relation is sought in a
\emph{hypothesis function space} of the following form
\begin{equation*}
\Big\{ f\colon \XC \to \R \,\big\vert\, (\exists(w,b)\in \HH\times\R)(\forall\, x \in \XC)\ f(x) = \pair{w}{\Phi(x)} + b \Big\},
\end{equation*}
where $\Phi\colon \XC \to \HH$ is a nonlinear map (the \emph{feature map}) from the input space
to a Hilbert space (the \emph{feature space}). Then, support vector regression is formulated
as the following minimization problem
\begin{equation}
\label{eq:20170221a}
\min_{(w,b) \in \HH\times\R} \frac{\gamma}{n} \sum_{i=1}^n 
L_{\varepsilon}\big(y_i - \pair{w}{\Phi(x_i)} - b\big) + \frac 1 2 \norm{w}^2,
\end{equation}
where, 
$L_\varepsilon(t) = \max\{0, \abs{t} - \varepsilon\}$ is the
Vapnik's \emph{$\varepsilon$-insensitive loss} \cite{Vap98}
and $\gamma>0$ is the regularization parameter. The optimization problem
\eqref{eq:20170221a} has the drawback that often it has to be solved in an high or even infinite dimensional Hilbert space.
In such case, it may be more convenient to approach its dual problem
(see \cite[Proposition~6.21]{Cri00} and \cite{Gir98,SteChi2009,Vap98})
 \begin{equation}
\label{eq:20170221b}
\left[
\begin{aligned}
&\min_{ u \in \R^n} 
\frac{1}{2 n^{2}}  
 \sum_{i, j= 1 }^n
 K(x_{i}, x_{j}) 
 u_{i} u_{j} 
 - \frac{1}{n} \sum_{i=1}^n y_i u_i
 + \frac{\varepsilon}{n} \sum_{i=1}^n \abs{u_i}
 \\[1.5ex]
&\text{subject to } \sum_{i=1}^n u_i = 0\quad\text{and}\quad 
\abs{u_i} \leq \gamma\ \text{for every}\ i \in \{1,\dots,n\},
\end{aligned}
\right.
\end{equation}
where $K(x, x^\prime) = \pair{\Phi(x)}{\Phi(x^\prime)}$ is the \emph{kernel function}.
Once a solution $u$ of the dual problem \eqref{eq:20170221b} is obtained,
a solution $(w,b)$ of the primal problem\footnote{Since 
$\abs{\cdot} \leq L_\varepsilon + \varepsilon$, the objective function in 
\eqref{eq:20170221a} is coercive in $(w,b)$, hence a solution exists.} \eqref{eq:20170221a} is computed by
means of the \emph{representer theorem} \cite{Gir98,Sch01}
\begin{equation}
\label{eq:20170222d}
w = \frac 1 n \sum_{i=1}^n u_i \Phi(x_i),
\end{equation}
and by choosing $b$ so that $y_j - \pair{w}{\Phi(x_j)} - b= \sign(u_j)\varepsilon$, for any $j$ 
with  $0<\abs{u_j}<\gamma$.
Moreover, and more importantly, the regression function $f = \pair{w}{\Phi(\cdot)} + b$ 
can be evaluated at a new input point $x$
by using the kernel function only
\begin{equation}
\label{eq:20170315c}
f(x) = 
\frac 1 n \sum_{i=1}^n u_i K(x_i,x) + b
= \frac 1 n \sum_{i=1}^n u_i\big( K(x_i,x) - K(x_i,x_j) \big) + y_j - \sign(u_j)\varepsilon.
\end{equation}
The significance of this theory stands in the fact that it provides a viable computational framework
for solving SVR, which is a nonparametric (infinite dimensional) estimation technique:
indeed given the kernel function (without knowing the feature map $\Phi$ itself) one
can formulate the dual optimization problem and evaluate the resulting regression function 
--- this is the \emph{kernel trick}
and constitutes the key idea of \emph{kernel methods} \cite{SteChi2008}.
We stress that, even in the case that $\HH$ is finite dimensional, going through the dual is
still convenient if $\mathrm{dim}\, \HH \gg n$.

The goal of the present work is to extend the above theory to the case 
of more general regularizers and more general hypothesis function spaces.
Popular estimation techniques that require regularizers
different from the square of Hilbert norms, are the \emph{lasso} 
and the \emph{bridge regression}, which can be formulated in a unifying manner as
\begin{equation}
\label{eq:20170222a}
\min_{(w,b) \in \ell^r(\KK)\times\R} \frac{\gamma}{n} \sum_{i=1}^n 
\big(y_i - \pair{w}{\Phi(x_i)} - b\big)^2 + \frac 1 r \norm{w}_r^r \qquad(1 \leq r <2).
\end{equation}
These techniques aim at finding the most relevant features $w_k$'s
in the representation of the regression function 
$f = \pair{w}{\Phi(\cdot)} + b =\sum_{k \in \KK} w_k \phi_k + b$,
when this representation is known to be sparse.
They are grounded on the fact that
$\norm{\cdot}_r^r$ preserves sparsity
for $r>1$ but close to $1$ \cite{Kol2009}.
However, even though the dual of the optimization problem \eqref{eq:20170222a} 
is in principle finite dimensional, the presence of the non-Hilbertian norm $\norm{\cdot}_r$
breaks the quadratic structure of the dual problem and
does not allow to define any useful kernel function describing 
the dual problem as well as the regression function.
Indeed the kernels defined in \cite{Com15,Zhang2009,Zhang2012},
in the setting of reproducing kernel Banach spaces, are not suitable
for that purpose (see Remark~\ref{rmk:20170315a}).
In the next section we show
that the estimation technique \eqref{eq:20170222a} can be naturally \emph{kernelized}
for certain choices of $r$, provided that one enlarges the concept of kernel functions.

In the following we consider duality for a continuous version of 
the support vector regression problem \eqref{eq:20170221a}
and for general convex regularizers and loss functions. So we address the optimization problem
\begin{equation}
\label{eq:mainprob1}
\min_{(w,b) \in \FF\times \R}\gamma \int_{\XC \times \YC} 
\loss\big(y - \pair{w}{\Phi(x)} - b\big) \ud P(x,y) + G(w),
\end{equation}
where the following assumptions are made:
\begin{description}
\item[A1] 
$\XC$ and $\YC$ are two nonempty sets such that $\YC \subset \R$.
 $\PP$ is
 a probability distribution on $\XC\times \YC$, defined on some
underlying $\sigma$-algebra $\mathfrak{A}$ on $\XC\times\YC$.
 $\FF$ is a real separable  reflexive Banach space and 
$\Phi\colon \XC \to \FF^*$ is a  measurable function.
The function $\loss\colon \R \to \RP$ is positive and convex,\!\!
\footnote{Usually one requires that $L$
is also even. In that case
it is easy to see that necessarily $0$ is a minimizer of $\loss$ 
and that $\loss$ is increasing on $\RP$. Indeed 
for every $t \in \RP$, we have $-t \leq 0 \leq t$, and hence
$0 = (1 - \alpha)(-t) + \alpha t$, for some $\alpha \in [0,1]$.
Then, by convexity $\loss(0) \leq (1 - \alpha)\loss(-t) + \alpha \loss(t) = \loss(t)$,
for $\loss(-t)=\loss(t)$. Moreover, for every $s,t \in \R$, with $0\leq s \leq t$, we have
$s = (1- \alpha)0 + \alpha t$, for some $\alpha \in [0,1]$, and hence
$\loss(s) \leq (1 - \alpha)\loss(0) + \alpha \loss(t)$ which yields
$\loss(s) - \loss(0) \leq \alpha (\loss(t) - \loss(0)) \leq \loss(t) - \loss(0)$.} 
 $p\in \left[1,+\infty\right[$, $\gamma \in \RPP$,
and $G\colon\FF \to \RX$ is proper, lower semicontinuous, 
 and convex.
 \item[A2] $(\exists\, (a,b) \in \RP^2)(\forall\, t \in \R)\quad \loss(t) \leq a + b \abs{t}^p$.
\item[A3] $\displaystyle
\int_{\XC\times\YC} \abs{y}^p \ud P(x,y)<+\infty
\quad\text{and}\quad
 \int_{\XC\times\YC} \norm{\Phi(x)}^p \ud \PP(x,y)<+\infty$.
\end{description}

In this context $\FF$ and $\Phi$ are respectively the \emph{feature space}
and the \emph{feature map}, and  $L$ is
the \emph{loss} function \cite{Com15,Zhang2009}.
If $\PP$ is chosen as a discrete distribution, say
$\PP = (1/n) \sum_{i=1}^n \delta_{(x_i,y_i)}$, for some
sample $(x_i,y_i)_{1 \leq i \leq n} \in (\XC\times\YC)^n$, then \eqref{eq:mainprob1} reduces to
the optimization problem
\begin{equation*}
\min_{(w,b) \in \FF\times \R}\frac{\gamma}{n}\sum_{i=1}^n
\loss\big(y_i - \pair{w}{\Phi(x_i)} - b \big) + G(w),
\end{equation*}
which 
encompasses problems \eqref{eq:20170221a} and \eqref{eq:20170222a}.
Assumption {\bf A2} corresponds to an upper 
growth condition for the loss $\loss$, whereas
assumption {\bf A3} includes a moment condition for the distribution $\PP$
and an integrable condition for the feature map $\Phi$, with respect to $\PP$.
They are both standard assumptions in support vector machines \cite{SteChi2008}
and ensure that the integral in \eqref{eq:mainprob1} is finite for every $(w,b) \in \FF\times \R$.
In the following we consider the Lebesgue space
\begin{equation*}
L^p(\PP) = \bigg\{ u\colon\XC\times\YC \to 
\R~\Big\vert~u \text{ is $\mathfrak{A}$-measurable and }
\int_{\XC\times\YC} \abs{u(x,y)}^p d \PP(x,y)<+\infty \bigg\}.
\end{equation*}
Problem \eqref{eq:mainprob1} is a convex optimization problem
of a composite form.
The following result 
first recasts the problem in a constrained form,
as done in
\cite{Cri00,Suy2002}, then presents
its dual problem, with respect to the Fenchel-Rockafellar
duality, and the related optimality conditions (Fact~\ref{fact:FRduality}).

\begin{theorem}
\label{thm:pridu}
Let assumptions  {\bf A1}, {\bf A2}, and {\bf A3} hold. Then problem \eqref{eq:mainprob1}
is equivalent to
\begin{equation}
\tag{$\mathcal{P}$}
\left[
\begin{aligned}
&\min_{(w,b,e) \in \FF\times\R \times L^p(P) }
\gamma \int_{\XC\times\YC} \loss(e(x,y))\ud P(x,y) + G(w),\\[1ex]
&\text{subject to}\ \ 
y -\pair{w}{\Phi(x)} - b  = e(x,y)
\quad \text{for $\PP$-a.a.}\ (x,y) \in \XC\times\YC
\end{aligned}
\right.
\end{equation}
and its dual is
\begin{equation}
\tag{$\mathcal{D}$}
\left[
\begin{aligned}
&\min_{ u \in L^{ p^{*}}\!(\PP)} 
G^*\bigg( \int_{\XC\times\YC} u(x,y)\Phi(x) \ud P(x,y) \bigg) \\
&\hspace{16ex}+ \gamma \int_{\XC \times \YC} L^*\bigg(\frac{u(x,y)}{\gamma}\bigg) \ud \PP(x,y) 
-  \int_{\XC \times \YC} y\, u(x,y) \ud \PP(x,y)\\[2ex]
&\text{subject to } \int_{\XC\times\YC} u\ud P = 0.
\end{aligned}
\right.
\end{equation}
Moreover, the dual problem $(\mathcal{D})$ admits solutions, strong duality holds, 
and for every 
$\displaystyle (w,b,e) \in \FF\times\R \times L^p(P)$ and every 
$u \in L^{p^*}(\PP)$, we have that
$(w,b,e)$ is a solution of $(\mathcal{P})$ and $u$ is a solution of $(\mathcal{D})$ 
if and only if the following optimality conditions hold
\begin{equation}
\label{eq:optPD}
\begin{cases}
\displaystyle w \in \partial G^* \bigg( \int_{\XC\times\YC} u(x,y)\Phi(x) \ud\PP(x,y) \bigg)\\[2ex]
\displaystyle \int_{\XC\times\YC} u \ud \PP = 0\\[3ex]
\dfrac{u(x,y)}{\gamma} \in \partial \loss(e(x,y))
\quad \text{for $\PP$-a.a.}\ (x,y) \in \XC\times\YC\\[1.5ex]
y - \pair{w}{\Phi(x)} - b = e(x,y)
\quad \text{for $\PP$-a.a.}\ (x,y) \in \XC\times\YC.
\end{cases}
\end{equation}
\end{theorem}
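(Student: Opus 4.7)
I would recast \eqref{eq:mainprob1} as $(\mathcal{P})$ by introducing the slack $e(x,y) := y - \pair{w}{\Phi(x)} - b$; assumption \textbf{A3} together with the bound $\abs{\pair{w}{\Phi(x)}} \leq \norm{w}\norm{\Phi(x)}$ forces $\pair{w}{\Phi(\cdot)} \in L^p(P)$ and $y \in L^p(P)$, so $e \in L^p(P)$ and the reformulation is tautological. To derive the dual, I would put $(\mathcal{P})$ into Fenchel--Rockafellar form on $\FF \times \R$ and $L^p(P)$ with
\[
f(w,b) := G(w), \qquad g(v) := \gamma \int L\big(y - v(x,y)\big)\,\ud P(x,y),
\]
and the bounded linear operator $B\colon \FF\times\R \to L^p(P)$, $B(w,b) := \pair{w}{\Phi(\cdot)} + b$; then $(\mathcal{P})$ takes the form $\min f(w,b) + g(B(w,b))$.

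Three ingredients must be fed into Fact~\ref{fact:FRduality}. First, $f^*(w^*,b^*) = G^*(w^*) + \iota_{\{0\}}(b^*)$, which is direct since $f$ does not depend on $b$. Second, $B^* u = \big(\int u(x,y)\Phi(x)\,\ud P,\;\int u\,\ud P\big)$, obtained by transposing the pairing (integrability follows from H\"older and \textbf{A3}). Third, the pointwise formula $g^*(u) = \int y u\,\ud P + \gamma \int L^*(-u/\gamma)\,\ud P$, which I would derive by the change of variable $w = y - v$ inside the supremum followed by Rockafellar's interchange theorem for normal convex integrands. The qualification condition $0 \in \inter\big(B(\dom f) - \dom g\big)$ is then immediate because \textbf{A2}, combined with $\abs{y - v}^p \leq 2^{p-1}(\abs{y}^p + \abs{v}^p)$ and \textbf{A3}, forces $\dom g = L^p(P)$.

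Fact~\ref{fact:FRduality} now yields $\min_u f^*(-B^* u) + g^*(u)$ as the dual together with dual solvability and strong duality; the involution $u \mapsto -u$ on $L^{p^*}(P)$ puts this into the exact form $(\mathcal{D})$ (the constraint $\int u\,\ud P = 0$ coming from the $\iota_{\{0\}}$-term in $f^*$). For the optimality conditions, after the sign flip Fact~\ref{fact:FRduality} reads $B^* u \in \partial f(\bar w,\bar b)$ and $-u \in \partial g(B(\bar w,\bar b))$. The first condition decomposes into $\int u\Phi\,\ud P \in \partial G(\bar w)$ -- equivalently $\bar w \in \partial G^*(\int u\Phi\,\ud P)$ by \eqref{eq:20160302a} -- together with $\int u\,\ud P = 0$. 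The second, using the pointwise characterization of $\partial g$ and the $-1$-factor chain rule coming from $v \mapsto y-v$, translates to $u(x,y)/\gamma \in \partial L(\bar e(x,y))$ for $P$-a.a.\ $(x,y)$, where $\bar e = y - \pair{\bar w}{\Phi(\cdot)} - \bar b$. These four conditions are precisely \eqref{eq:optPD}.

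The only non-routine step is the pointwise computation of $g^*$ and $\partial g$: both require that $(x,y,t) \mapsto L(t)$ be a normal convex integrand (which holds since $L$ is convex, hence continuous, on $\R$) and that $\dom g = L^p(P)$ (guaranteed by the polynomial upper bound \textbf{A2}); with these in hand, Rockafellar's classical interchange theorems apply and the remainder of the argument is bookkeeping.
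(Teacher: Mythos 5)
Your argument is correct and lands on exactly the same dual and optimality conditions, but the Fenchel--Rockafellar decomposition is genuinely different from the paper's. The paper lifts the primal to the product space $\FF\times\R\times L^p(P)$ with the slack $e$ as an explicit third variable, takes $f(w,b,e)=\gamma R_P(e)+G(w)$ with $R_P(e)=\int L(e)\,\ud P$, and uses $g=\iota_{\{-\mathrm{pr}_2\}}$ composed with $-B$ where $B(w,b,e)=A(w,b)+e$; the linear term $-\int y\,u\,\ud P$ in $(\mathcal{D})$ then comes from $(\iota_{\{-\mathrm{pr}_2\}})^*(u)=\pair{-\mathrm{pr}_2}{u}_{p,p^*}$, and the constraint qualification is automatic because $e$ sweeps out all of $L^p(P)$, so $B(\dom f)=L^p(P)$ independently of \textbf{A2}. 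You instead keep the primal on $\FF\times\R$, absorb the data into $g(v)=\gamma\int L(y-v)\,\ud P$, and recover the linear term via the change of variables $s=y-v$ inside $g^*$; this is leaner (no auxiliary primal variable, and the dual variable is paired directly with the residual), at the price of needing \textbf{A2} together with $\abs{y-v}^p\le 2^{p-1}(\abs{y}^p+\abs{v}^p)$ to get $\dom g=L^p(P)$ for the qualification condition, plus the $-1$ chain-rule and the involution $u\mapsto -u$ to match the stated form of $(\mathcal{D})$ and of \eqref{eq:optPD}. Both routes rely on the same two nontrivial external ingredients --- the adjoint computation $A^*u=\big(\int u\Phi\,\ud P,\int u\,\ud P\big)$ and Rockafellar's interchange theorems for normal convex integrands to pass the conjugate and the subdifferential under the integral sign --- and you have identified precisely these as the non-routine steps, so the proposal is sound; note only that the paper's lifted formulation also delivers the constrained form $(\mathcal{P})$ as an intrinsic part of the duality structure rather than as a separate tautological restatement.
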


\begin{remark}\ 
\begin{enumerate}[$(i)$]
\item\label{it:20170315a} If $L$ and $G$ are coercive, that is $\lim_{\abs{t}\to +\infty} L(t) = +\infty$
and $\lim_{\norm{w}\to +\infty} G(w)=+\infty$ (e.g., this is the case of \eqref{eq:20170221a}), 
then the primal problem $(\mathcal{P})$
admits solutions. Moreover, if in addition $L$ and $G$ are strictly convex
(as for \eqref{eq:20170222a}), the solution is unique.
\item If the offset $b=0$, condition $\int_{\XC\times\YC} u \ud \PP = 0$ in \eqref{eq:optPD} 
can be omitted. Moreover, the coercivity (resp.~the strictly convexity)
of $G$ only suffices to get the existence (resp.~uniqueness) of solutions of
problem $(\mathcal{P})$.
\end{enumerate}
\end{remark}

\begin{remark}\ 
\begin{enumerate}[$(i)$]
\item The form $(\mathcal{P})$ resembles the way the problem of 
support
vector machines for regression is often formulated \cite[eq. (3.51)]{Suy2002} and
the optimality conditions \eqref{eq:optPD} are the continuous
versions of the one stated in \cite[eq. (3.52)]{Suy2002} for RKHS,
 differentiable loss functions, and square norm regularizers.
 \item Let $u$ be a solution of the dual problem $(\mathcal{D})$.
Then the (possibly empty) set of solutions of the primal problem $(\mathcal{P})$
 is composed by the $(w,b,e)$'s satisfying the optimality conditions \eqref{eq:optPD}.
Equivalently, recalling \eqref{eq:20160302a}, the solutions of the primal problem, 
in the form \eqref{eq:mainprob1}, are given by
\begin{equation}
\label{eq:optPD2}
\begin{cases}
\displaystyle w \in \partial G^* \bigg( \int_{\XC\times\YC} u(x,y)\Phi(x) \ud\PP(x,y) \bigg)\\[2ex]
 b  \in y - \pair{w}{\Phi(x)} - \partial \loss^*\bigg(\dfrac{u(x,y)}{\gamma}\bigg)
\quad \text{for $\PP$-a.a.}\ (x,y) \in \XC\times\YC.
\end{cases}
\end{equation}
\item If $G$ is strictly convex on every convex subset of $\dom \partial G$
and $\mathrm{int}(\dom G^*) = \dom \partial G^*$, then $G^*$ is 
G\^ateaux differentiable (hence $\partial G^*$ is single valued) 
on $\dom \partial G^*$ \cite[Proposition~18.9]{Livre1} and,
if $(w,b)$ is  a solution of the primal problem \eqref{eq:mainprob1}, then
the first of \eqref{eq:optPD2} yields
\begin{equation}
\label{eq:rep}
w = \nabla G^* \bigg( \int_{\XC\times\YC} u\Phi \ud\PP \bigg).
\end{equation}
This constitutes a general \emph{nonlinear representer theorem}, since the solution
of problem $(\mathcal{P})$ is expressed in terms of the values of the feature map $\Phi$.
When $\PP$
is the discrete distribution $\PP = (1/n) \sum_{i=1}^n \delta_{(x_i,y_i)}$, for some
sample $(x_i,y_i)_{1 \leq i \leq n} \in (\XC\times\YC)^n$, then 
\eqref{eq:rep} becomes
\begin{equation}
\label{eq:repd}
w = \nabla G^* \bigg( \frac 1 n \sum_{i=1}^n  u_i \Phi(x_i) \bigg).
\end{equation}
 \item In the special case that $\FF$ is a Hilbert space,
$\FF$ is isomorphic to its dual and the pairing 
reduces to the inner product in $\FF$. If $b=0$ and
$G = (1/2) \norm{\cdot}^2$, then
$\nabla G^* = \mathrm{Id}$ and
the optimality conditions \eqref{eq:optPD} reduce to the equations
\begin{equation*}
w = \int_{\XC\times\YC} u\Phi \ud\PP
\quad\text{and}\quad\frac{u(x,y)}{\gamma} \in \partial L(y - \pair{w}{\Phi(x)})
\quad \text{for $\PP$-a.a.}\ (x,y) \in \XC\times\YC,
\end{equation*}
which were obtained in  \cite[Corollary~3]{DeVito04}.
If additionally, $\PP$ is discrete, then \eqref{eq:repd} turns to \eqref{eq:20170222d}.
\item If $\FF = \ell^2(\KK)$ and $G = \norm{\cdot}_1 +  1/(2\tau) \norm{\cdot}^2$ (elastic net regularization),
then $G^*$ is strongly convex, $\nabla G^* = \prox_{\tau\norm{\cdot}_1} (\tau\,\cdot)$ \cite{Livre1}, 
and \eqref{eq:rep} and \eqref{eq:repd} turn respectively 
to the following representation formulas
\begin{equation*}
w = \prox_{\tau\norm{\cdot}_1}\bigg(\tau \int_{\XC\times\YC} u\Phi \ud\PP\bigg)
\quad\text{and}\quad
w = \prox_{\tau\norm{\cdot}_1}\bigg(\frac{\tau}{n}  \sum_{i=1}^n u_i \Phi(x_i)\bigg),
\end{equation*}
where $\prox_{\tau\norm{\cdot}_1}$ acts component-wise as a soft-thresholding operation with threshold $\tau$.
\end{enumerate}
\end{remark}

The optimality conditions \eqref{eq:optPD} in Theorem~\ref{thm:pridu}
yield the following continuous representer theorem
in Banach space setting and for regularizers that are function of the norm.

\begin{corollary}[Continuous representer theorem]
\label{cor:20160223m}
Let assumptions  {\bf A1}, {\bf A2}, and {\bf A3} hold.
Suppose that $\FF$ is strictly
convex and smooth and let $r \in \left]1,+\infty\right[$. 
In problem $(\mathcal{P})$, suppose that $G = \varphi \circ \norm{\cdot}$, for some 
convex and even function $\varphi\colon \R \to \R_+$
such that $\argmin \varphi = \{0\}$.
Let $(w,b)$ be a solution of problem $(\mathcal{P})$. Then $w$ admits the following representation
\begin{equation}
\label{eq:20160223l}
J_r(w) =  \int_{\XC\times\YC} c(x,y)\Phi(x) \ud \PP(x,y),
\end{equation}
for some function $\displaystyle c \in L^{p^*}(\PP)$,
where $J_r\colon \FF \to \FF^*$ is the $r$-duality map of $\FF$.
\end{corollary}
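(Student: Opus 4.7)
The plan is to invoke Theorem~\ref{thm:pridu} together with Fact~\ref{fact1} applied with $p$ replaced by $r$, and then rearrange the resulting subdifferential inclusion.

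First, by Theorem~\ref{thm:pridu}, since problem $(\mathcal{P})$ admits a solution $w$ (by assumption of the statement), the dual problem $(\mathcal{D})$ also admits a solution $u \in L^{p^*}(\PP)$, and the optimality conditions \eqref{eq:optPD} hold. Using the duality relation \eqref{eq:20160302a}, the first of \eqref{eq:optPD} can be rewritten as
\begin{equation*}
\int_{\XC\times\YC} u(x,y)\Phi(x)\,\ud \PP(x,y) \,\in\, \partial G(w) = \partial(\varphi \circ \norm{\cdot})(w).
\end{equation*}
Since $\FF$ is reflexive, strictly convex and smooth and $\varphi \in \Gamma_0(\R)$ is even, Fact~\ref{fact1} applies with $p=r$. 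This is the key computational step: it turns the abstract subdifferential $\partial G(w)$ into an explicit multiple of the duality map $J_r(w)$.

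Next I would split into two cases. If $w = 0$, then $J_r(0)=0$ and the conclusion \eqref{eq:20160223l} holds trivially with $c \equiv 0$. If $w \neq 0$, Fact~\ref{fact1} gives
\begin{equation*}
\partial G(w) = \frac{\partial \varphi(\norm{w})}{\norm{w}^{r-1}}\,J_r(w),
\end{equation*}
so there exists $t \in \partial \varphi(\norm{w})$ such that
\begin{equation*}
\int_{\XC\times\YC} u(x,y)\Phi(x)\,\ud \PP(x,y) = \frac{t}{\norm{w}^{r-1}} J_r(w).
\end{equation*}
The main delicate point is to verify that $t \neq 0$: since $\varphi$ is convex, even, and $\argmin \varphi = \{0\}$, $\varphi$ is strictly increasing on $\RP$ (from $\varphi(0)$), so no positive number can be a minimizer and therefore $0 \notin \partial \varphi(s)$ for any $s > 0$; moreover evenness and convexity force $\partial \varphi(s) \subset \RP$ for $s \geq 0$, giving $t > 0$.

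Once $t > 0$ is secured, define $c := (\norm{w}^{r-1}/t)\,u \in L^{p^*}(\PP)$ and multiply both sides of the above identity by $\norm{w}^{r-1}/t$ to obtain the desired
\begin{equation*}
J_r(w) = \int_{\XC\times\YC} c(x,y)\Phi(x)\,\ud \PP(x,y),
\end{equation*}
which completes the proof. The main obstacle is the non-vanishing of $t$, which relies critically on the hypothesis $\argmin \varphi = \{0\}$; all other steps are essentially direct translations of Theorem~\ref{thm:pridu} and Fact~\ref{fact1}.
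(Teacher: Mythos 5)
Your proposal is correct and follows essentially the same route as the paper: pass to the optimality conditions of Theorem~\ref{thm:pridu}, apply Fact~\ref{fact1} to $\partial G(w)$, handle $w=0$ trivially, and divide by the positive subgradient $t\in\partial\varphi(\norm{w})$. Your justification that $t>0$ (via strict monotonicity of $\varphi$ on $\RP$) is a slight variant of the paper's one-line subgradient-inequality argument $\xi\norm{w}\geq\varphi(\norm{w})-\varphi(0)>0$, but both are sound.
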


\begin{remark}
\label{rmk:20170315a}
 If in Corollary~\ref{cor:20160223m}, $r=2$ and $\PP$ is a discrete measure, say
 $\PP = (1/n)\sum_{i=1}^n\delta_{(x_i,y_i)}$, for some
sample $(x_i,y_i)_{1 \leq i \leq n} \in (\XC\times\YC)^n$, then
\eqref{eq:20160223l} becomes
\begin{equation}
\label{eq:20160229a}
J_2(w) = \sum_{i=1}^n c_i \Phi(x_i),\quad {(c_i)}_{1 \leq i \leq n} \in \R^n,
\end{equation}
where $J_2$ is the normalized duality map of $\FF$.
Formula \eqref{eq:20160229a} is the way the representer theorem is usually
presented in reproducing kernel Banach spaces \cite{Fass15,Zhang2009,Zhang2012}. Here it
is a simple consequence of the more general 
Theorem~\ref{thm:pridu} and Corollary~\ref{cor:20160223m}.
Moreover, we stress that our derivation of \eqref{eq:20160229a} relies
on convex analysis arguments only, while in the above cited literature
it is proved as a consequence of a representer theorem for
function interpolation, ultimately using different techniques and stronger hypotheses.
We note also that in Banach space setting \cite{Zhang2009,Zhang2012}, the kernel is defined as
\begin{equation*}
K(x,x^\prime) = \pair{J_2^{-1}(\Phi(x))}{\Phi(x^\prime)},
\end{equation*}
but this kernel function is inadequate for describing the dual problem 
and evaluating the regression function $\pair{w}{\Phi(x)} + b$ at a new point $x$ \cite{Sri11}.
\end{remark}

\begin{example}
\label{ex:epsinsensitive}
We consider the case of the  \emph{$\varepsilon$-insensitive loss} \cite{SteChi2009,Vap98}.
Let $\varepsilon>0$ and define
\begin{equation}
\label{eq:epsloss}
\loss_{\varepsilon} \colon \R \to \R_+\colon t \mapsto \max\{0, \abs{t} - \varepsilon\}.
\end{equation}
This loss clearly satisfies {\bf A2} for every $p\geq 1$.
We note that \eqref{eq:epsloss} is 
the distance function from the set $[-\varepsilon,\varepsilon]$,
that is, using the notation in \cite{Livre2}, we have $\loss_{\varepsilon} 
= d_{[-\varepsilon,\varepsilon]}$. Then, the Fenchel conjugate of $\loss_{\varepsilon}$
is (see \cite[Example~13.24(i)]{Livre2})
\begin{equation*}
\loss^*_{\varepsilon} = \sigma_{[-\varepsilon,\varepsilon]} + \iota_{[-1,1]} =
\varepsilon \abs{\cdot} + \iota_{[-1,1]}.
\end{equation*}
Therefore, for the loss \eqref{eq:epsloss}, the dual problem $(\mathcal{D})$ becomes
\begin{equation*}
\left[
\begin{aligned}
&\min_{ u \in L^{ p^{*}}\!(\PP)} 
G^*\bigg( \int_{\XC\times\YC} u(x,y)\Phi(x) \ud P(x,y) \bigg) \\
&\hspace{16ex}+ \varepsilon \int_{\XC \times \YC} \abs{u(x,y)} \ud \PP(x,y) 
- \int_{\XC \times \YC} y\, u(x,y) \ud \PP(x,y)\\[2ex]
&\text{subject to } \int_{\XC\times\YC} u\ud P = 0\quad\text{and}\quad
\abs{u(x,y)}\leq \gamma
\ \text{for $\PP$-a.a.}\ (x,y) \in \XC\times\YC.
\end{aligned}
\right.
\end{equation*}
This is a continuous version of the dual problem \eqref{eq:20170221b}, 
where here we have a general regularizer and
a Banach feature space.
\end{example}

\begin{remark}\
Let us consider the case that $\FF$ is a Hilbert space. 
Moreover,  suppose that $G = (1/2) \norm{\cdot}^2$,
 that $L= (1/2)\abs{\cdot}^2$, and that $b=0$, so that
 in \eqref{eq:optPD} the condition $\int_{\XC\times\YC} u \ud \PP = 0$
can be neglected.
Then it follows from  \eqref{eq:optPD} that
\begin{equation*}
w=\int_{\XC\times\YC} u \Phi \ud \PP, \quad \frac{u}{\gamma} = e
\end{equation*}
and hence
\begin{equation*}
\pair{w}{\Phi(x)} =
\int_{\XC\times\YC} u(x^\prime,y^\prime) \pair{\Phi(x^\prime)}{\Phi(x)} \ud \PP(x^\prime,y^\prime).
\end{equation*}
Thus, the last of \eqref{eq:optPD} yields the following integral equation
\begin{equation*}
(\forall\, (x,y) \in \XC\times\YC)\qquad \frac{u(x,y)}{\gamma} + 
\int_{\XC\times\YC} u(x^\prime,y^\prime) \pair{\Phi(x^\prime)}{\Phi(x)} \ud \PP(x^\prime,y^\prime)
= y.
\end{equation*}
\end{remark}

\section{Tensor-kernel representation}
\label{sec:kernrep}

In this section we study a framework that includes SVR problems of type \eqref{eq:20170222a}
(for certain choices of $r$) and that provides a new tensorial \emph{kernelization} of the
dual problem $(\mathcal{D})$.
For clarity we consider separately the
real and complex case. We describe the real case with full details,
whereas in the complex case we provide results with sketched
 proofs only.

\subsection{The real case}
\label{subsec:real}
Let $\displaystyle\FF = \ell^r(\KK)$, with $\KK$ a countable set and 
$r = m/(m-1)$ for some even integer  $m\geq 2$. Thus, we have $r^* = m$.
Let $(\phi_k)_{k \in \KK}$ be a family of measurable functions from $\XC$ to $\R$ such that,
for every $x \in \XC$, $\displaystyle {(\phi_k(x))}_{k \in \KK} \in \ell^{r^*}\!(\KK)$
and define the feature map as
\begin{equation}
\label{eq:20160225a}
\Phi\colon \XC \to \ell^{r^*}\!(\KK) 
\colon x \mapsto (\phi_k(x))_{k \in \KK}.
\end{equation}
Thus, we consider the following  linear model
\begin{equation}
\label{eq:20160225b}
\big(\forall\, (w,b) \in \ell^r(\KK)\times\R \big)\qquad f_{w,b} =\pair{w}{\Phi(\cdot)}_{r,r^*} + b
=  \sum_{k \in \KK} w_k \phi_k + b\ \text{(pointwise)},
\end{equation}
where $\pair{\cdot}{\cdot}_{r,r^*}$ is the canonical pairing between $\ell^r(\KK)$
and $\ell^{r^*}\!(\KK)$.
The space
\begin{equation}
\label{eq:20160305a}
\bb = \bigg\{ f\colon\XC \to \R~\Big\vert~(\exists (w,b) \in \ell^r(\KK)\times\R)(\forall\, x \in \XC)
\bigg(f(x) = \sum_{k \in \KK} w_k\phi_k(x) + b\bigg)\bigg\}
\end{equation}
is a \emph{reproducing kernel Banach space} with norm
\begin{equation*}
(\forall\, f \in \bb)\quad
\norm{f}_\bb = \inf \bigg\{ \norm{w}_r + \abs{b}~\Big\vert~(w,b)\in \ell^r(\KK)\times\R\ \text{and}\ 
f = \sum_{k \in \KK} w_k \phi_k+ b\ \text{(pointwise)} \bigg\},
\end{equation*}
meaning that, with respect to that norm, 
the point-evaluation operators are continuous \cite{Com15,Zhang2009}.
We also consider the following regularization function
\begin{equation}
\label{eq:20160225c}
G(w) = \varphi(\norm{w}_r),
\end{equation}
for some convex and even function $\varphi\colon \R \to \R_+$,
such that $\argmin \varphi = \{0\}$,
and we set 
\begin{equation}
\label{eq:20170309a}
P =\frac 1 n \sum_{i=1}^n \delta_{(x_i,y_i)},
\end{equation}
for some given sample ${(x_i,y_i)}_{1 \leq i \leq n} \in (\XC \times \YC)^n$. 

\begin{remark}
\label{rmk:schauder}
Consider the reproducing kernel Banach space
\begin{equation*}
\bb = \bigg\{ f\colon\XC \to \R~\Big\vert~(\exists w \in \ell^r(\KK))(\forall\, x \in \XC)
\bigg(f(x) = \sum_{k \in \KK} w_k\phi_k(x) \bigg)\bigg\}
\end{equation*}
endowed with norm
$\norm{f}_\bb = \inf \big\{ \norm{w}_r~\big\vert~w\in \ell^r(\KK)\ \text{and}\ 
f = \sum_{k \in \KK} w_k \phi_k\ \text{(pointwise)} \big\}$.
Let $f \in \bb$ and let $\displaystyle{(w_k)}_{k \in \KK} \in \ell^r(\KK)$ be
such that $f = \sum_{k \in \KK} w_k \phi_k$ pointwise.
Then, for every finite subset $\mathbb{J} \subset \KK$ we have
$f - \sum_{k \in \mathbb{J}} w_k \phi_k= \sum_{k \in \KK\setminus \mathbb{J}} w_k \phi_k$
pointwise; hence, by definition 
\begin{equation*}
\bigg\lVert f - \sum_{k \in \mathbb{J}} w_k \phi_k \bigg\rVert_\bb 
\leq \big\lVert{(w_k)}_{k \in \KK\setminus \mathbb{J}}\big\rVert_r 
= \bigg( \sum_{k \in \KK\setminus \mathbb{J} } \abs{w_k}^r \bigg)^{1/r} \to 0
\quad\text{as}\ \abs{\mathbb{J}}\to +\infty.
\end{equation*}
Thus, the family ${(w_k \phi_k)}_{k \in \KK}$ is summable in $(\bb, \norm{\cdot}_\bb)$
and it holds $f = \sum_{k \in \KK} w_k \phi_k$ in $(\bb, \norm{\cdot}_\bb)$.
Therefore, if the family of functions ${(\phi_k)}_{k \in \KK}$ is pointwise
$\ell^r$-independent, in the sense that
\begin{equation}
\label{eq:lrind}
(\forall\, {(w_k)}_{k \in \KK} \in \ell^r(\KK))\quad \sum_{k \in \KK} w_k \phi_k = 0
\ \text{(pointwise)}\ \Rightarrow {(w_k)}_{k \in \KK} \equiv 0,
\end{equation}
then $(\phi_k)_{k \in \KK}$ is an unconditional Schauder basis of
$\bb$. Indeed if $\sum_{k \in \KK} w_k \phi_k = 0$
in $(\bb, \norm{\cdot}_\bb)$, since the evaluation operators on $\bb$ are continuous, we have
$\sum_{k \in \KK} w_k \phi_k = 0$ pointwise, and hence, by \eqref{eq:lrind}, 
${(w_k)}_{k \in \KK} \equiv 0$. We finally note that when
$(\phi_k)_{k \in \KK}$ is a (unconditional) Schauder basis of $\bb$,
then $\bb$ is isometrically isomorphic to $\ell^r(\KK)$.
\end{remark}

In the setting \eqref{eq:20160225a}--\eqref{eq:20170309a}, 
the primal and dual problems 
considered in Theorem~\ref{thm:pridu} turn into
\begin{equation}
\tag{$\mathcal{P}_n$}
\left[
\begin{aligned}
&\min_{(w,b,e) \in \ell^r(\KK)\times\R \times \R^n }
\frac{\gamma}{n} \sum_{i=1}^n  \loss(e_i) + \varphi(\norm{w}_r),\\[1ex]
&\text{subject to}\ \ 
y_i -\pair{w}{\Phi(x_i)}_{r,r^*} - b  = e_i,
\quad \text{for every}\ i \in \{1,\dots, n\}
\end{aligned}
\right.
\end{equation}
and, since $G^* = \varphi^* \circ \norm{\cdot}_{r^*}$
(Fact~\ref{fact1}),
\begin{equation}
\tag{$\mathcal{D}_n$}
\left[
\begin{aligned}
&\min_{ u \in \R^n} 
\varphi^*\bigg(\bigg\lVert \frac{1}{n}\sum_{i=1}^n u_i\Phi(x_i) \bigg\rVert_{r^*} \bigg) 
+ \frac{\gamma}{n} \sum_{i=1}^n L^*\bigg(\frac{u_i}{\gamma}\bigg) 
- \frac{1}{n} \sum_{i=1}^n y_i u_i\\[1.5ex]
&\text{subject to } \sum_{i=1}^n u_i = 0.
\end{aligned}
\right.
\end{equation}
Moreover, 
Fact~\ref{fact1} and \eqref{eq:optPD} yield that
$(w,b,e)$ solves $(\mathcal{P}_n)$ and $u$ solves $(\mathcal{D}_n)$
if and only if
\begin{equation}
\label{eq:optPDn}
\begin{cases}
\displaystyle w \in 
\frac{\partial \varphi^*\big( \frac{1}{n} \big\lVert \sum_{i=1}^n u_i \Phi(x_i)\big\rVert_{r^*} \big)}
{\big\lVert  \sum_{i=1}^n u_i \Phi(x_i)\big\rVert_{r^*}^{r^* -1}} 
J_{r^*} \bigg( \sum_{i=1}^n u_i\Phi(x_i) \bigg)\\[2ex]
\displaystyle \sum_{i=1}^n u_i = 0\\[3ex]
u_i/\gamma \in \partial \loss(e_i)
\quad \text{for every}\ i \in \{1,\dots,n\}\\[1.5ex]
y_i - \pair{w}{\Phi(x_i)}_{r,r^*} - b = e_i
\quad \text{for every}\ i \in \{1,\dots, n\},
\end{cases}
\end{equation}
where 
$J_{r^*}\colon \ell^{r^*}\!(\KK) \to \ell^{r}(\KK)\colon 
u \mapsto {\big(\abs{u_k}^{r^* - 1} \sign(u_k) \big)}_{k \in \N}$ and
we assumed that in the first equation of \eqref{eq:optPDn},
the right hand side has to be interpreted as $\{0\}$ when $\sum_{i=1}^n u_i \Phi(x_i)=0$.\!\!
\footnote{Since  
$G^* = \varphi^* \circ \norm{\cdot}_{r^*}$
and $\{0\} = \argmin \varphi = \partial \varphi^*(0)$,
Fact~\ref{fact1} yields
$\partial (\varphi^* \circ \norm{\cdot}_{r^*})(w^*) = 
\frac{\partial \varphi^* (\norm{w^*}_{r^*})}{\norm{w^*}_{r^*}^{r^*-1}} J_{r^*}(w^*)$ if 
$w^*\neq 0$, and $\partial (\varphi^* \circ \norm{\cdot}_{r^*})(w^*) = \{0\}$ if $w^*=0$.
}

The dual problem $(\mathcal{D}_n)$ is a
convex optimization problem and it is finite dimensional,
since it is defined on $\R^n$. Once $(\mathcal{D}_n)$ is solved, expressions
in \eqref{eq:optPDn} in principle allow
to recover the primal solution $(w,b)$ and eventually to compute the estimated
regression  function
$\pair{w}{\Phi(x)}+b$ at a generic point $x$ of the input space $\XC$.
However, if $\KK$ is an infinite set, that procedure is not feasible 
in practice, since it relies on
the explicit knowledge of
the feature map $\Phi$
and on the computation of an infinite dimensional scalar product.
In the following we show that, in the dual problem $(\mathcal{D}_n)$, 
we can actually get rid of the feature map $\Phi$
and use instead a new type of kernel function
evaluated at the sample points $(x_i)_{1 \leq i \leq n}$.
This will ultimately provide a new and effective computational framework
for treating support vector regression in Banach spaces of type \eqref{eq:20160305a}.


Now we are ready to define a tensor-kernel associated to the feature map \eqref{eq:20160225a} 
and give its main properties.

\begin{proposition}
\label{p:tensor}
In the setting \eqref{eq:20160225a} described above,
with $r^*=m$ even integer,
the following function is well-defined
\begin{equation}\label{eq:20150525b}
K\colon \XC^m= \underbrace{\XC\times \dots\times \XC}_{m\text{ times}} \to \R\colon (x^\prime_1, \dots, x^\prime_m) 
\mapsto \sum_{k \in \KK}  \phi_k(x^\prime_{1})\cdots \phi_k(x^\prime_{m}),
\end{equation}
and the following hold.
\begin{enumerate}[$(i)$]
\item\label{p:tensorii} 
For every 
$(x^\prime_1, \dots, x^\prime_m) \in \XC^m$, and for every permutation $\sigma$
of the indexes $\{1,\dots, m\}$,
\begin{equation*}
K(x^\prime_{\sigma(1)} \dots x^\prime_{\sigma(m)}) = 
K(x^\prime_{1}, \dots x^\prime_{m}).
\end{equation*}
\item\label{p:tensorv}  
For every ${(x_i)}_{1 \leq i \leq n}\in \XC^n$
\begin{equation*}
(\forall\, u \in \R^n)\qquad\sum_{i_1,\dots, i_m = 1}^n K(x_{i_1}, \dots, x_{i_m}) u_{i_1} \dots u_{i_m} \geq 0\,.
\end{equation*}
\item\label{p:tensoriv} 
For every ${(x_i)}_{1 \leq i \leq n}\in \XC^n$
\begin{equation}\label{eq:20150527a}
u \in \R^n \mapsto \bigg\lVert \sum_{i=1}^n u_i \Phi(x_i) \bigg\rVert^{r^*}_{r^*}
= \sum_{i_1,\dots, i_m = 1}^n K(x_{i_1}, \dots, x_{i_m}) u_{i_1} \dots u_{i_m}
\end{equation}
is a homogeneous polynomial form of degree $m$ on $\R^n$.
\item\label{p:tensori} 
For every $x \in \XC$, $K(x, \dots, x) \geq 0$.
\item\label{p:tensoriii} 
For every $(x^\prime_1, \dots, x^\prime_m) \in \XC^m$
\begin{equation*}
\abs{K(x^\prime_1, \dots, x^\prime_m)} \leq K(x^\prime_1, \dots, x^\prime_1)^{1/m} 
\cdots K(x^\prime_m, \dots, x^\prime_m)^{1/m}.
\end{equation*}
\end{enumerate}
\end{proposition}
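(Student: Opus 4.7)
The plan is to first check well-definedness of $K$, then establish the polynomial identity stated in (iii), and finally deduce the remaining properties from it together with the generalized Cauchy–Schwarz inequality just proved. The whole proposition is essentially a bookkeeping exercise once Proposition~\ref{p:20150525b} is available—what is conceptually new is that the tensor-kernel identity in (iii) replaces, in the Banach setting, the usual quadratic Gram-matrix identity of the Hilbert case.

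For well-definedness, I fix $(x'_1,\dots,x'_m) \in \XC^m$ and observe that for each $j$ the sequence $a_j := (\abs{\phi_k(x'_j)})_{k \in \KK}$ belongs to $\ell^m_+(\KK)$, since $\Phi(x'_j) \in \ell^{r^*}\!(\KK) = \ell^m(\KK)$ by construction. Applying the generalized Cauchy–Schwarz inequality yields
\begin{equation*}
\sum_{k \in \KK} \abs{\phi_k(x'_1) \cdots \phi_k(x'_m)} \leq \prod_{j=1}^m \norm{\Phi(x'_j)}_m < +\infty,
\end{equation*}
so the defining series for $K$ converges absolutely. Part (i), symmetry of $K$ under permutations of its arguments, is then immediate from the commutativity of multiplication in $\R$.

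For the identity (iii), I use that $r^* = m$ is even to write, for any $u \in \R^n$,
\begin{equation*}
\bigg\lVert \sum_{i=1}^n u_i \Phi(x_i) \bigg\rVert^{r^*}_{r^*} = \sum_{k \in \KK} \bigg(\sum_{i=1}^n u_i \phi_k(x_i)\bigg)^{\!m}.
\end{equation*}
Expanding the $m$-th power is a finite combination of $n^m$ multi-indices, so swapping it with the infinite sum over $k$ is legitimate—each resulting inner series converges absolutely by the well-definedness argument—and produces
\begin{equation*}
\sum_{i_1,\dots,i_m = 1}^n u_{i_1}\cdots u_{i_m} \sum_{k \in \KK} \phi_k(x_{i_1})\cdots\phi_k(x_{i_m}) = \sum_{i_1,\dots,i_m = 1}^n K(x_{i_1},\dots,x_{i_m})\, u_{i_1}\cdots u_{i_m},
\end{equation*}
which is manifestly a homogeneous polynomial form of degree $m$ in $u$. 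Item (ii) follows because the left-hand side is nonnegative, and specializing to $n=1$, $u_1 = 1$, $x_1 = x$ gives (iv), namely $K(x,\dots,x) = \norm{\Phi(x)}_m^m \geq 0$.

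Finally, for (v) I bound the series termwise,
\begin{equation*}
\abs{K(x'_1,\dots,x'_m)} \leq \sum_{k \in \KK} \abs{\phi_k(x'_1)} \cdots \abs{\phi_k(x'_m)},
\end{equation*}
and apply the generalized Cauchy–Schwarz inequality to the sequences $a_j = (\abs{\phi_k(x'_j)})_{k \in \KK} \in \ell^m_+(\KK)$ to majorize the right-hand side by $\prod_{j=1}^m \big(\sum_{k \in \KK} \abs{\phi_k(x'_j)}^m\big)^{1/m}$. Since $m$ is even, $\abs{\phi_k(x'_j)}^m = \phi_k(x'_j)^m$, so each factor equals $K(x'_j,\dots,x'_j)^{1/m}$ and (v) follows. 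There is no genuine obstacle anywhere in the argument; the only step that carries real content is the expansion leading to (iii), where the symmetric $m$-tensor structure of the Banach setting emerges from the evenness of $r^*$.
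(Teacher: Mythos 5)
Your proof is correct and follows essentially the same route as the paper: well-definedness and item (v) via the generalized Cauchy--Schwarz inequality of Proposition~\ref{p:20150525b} (using that $m$ is even to drop absolute values), and items (ii)--(iii) by expanding the $m$-th power and exchanging the finite sum over multi-indices with the sum over $k$. The only cosmetic difference is that you obtain (iv) by specializing (iii) to $n=1$, whereas the paper reads it off directly from the definition since each term $\phi_k(x)^m$ is nonnegative; both are immediate.
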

\begin{proof}
Since $(\phi_k(x^\prime_1))_{k \in \KK}, (\phi_k(x^\prime_2))_{k \in \KK}, \dots
(\phi_k(x^\prime_m))_{k \in \KK} \in \ell^m(\KK)$, it follows from 
Fact~\ref{p:20150525b} 
that $(\phi_k(x^\prime_1) \phi_k(x^\prime_2)
\cdots \phi_k(x^\prime_m))_{k \in \KK} \in \ell^1(\KK)$ and
\begin{equation}\label{eq:20150526c}
\sum_{k \in \KK}\abs{\phi_k(x^\prime_1)\cdots \phi_k(x^\prime_m)} \leq
\bigg(\sum_{k \in \KK} \abs{\phi_k(x^\prime_1)}^m \bigg)^{1/m}\!\!\!\!\!\! \cdots
\bigg(\sum_{k \in \KK} \abs{\phi_k(x^\prime_m)}^m \bigg)^{1/m}.
\end{equation}
This shows that definition \eqref{eq:20150525b} is well-posed
and moreover, since $m$ is even we can remove the absolute values in the
right hand side of \eqref{eq:20150526c} and get \ref{p:tensoriii}.
Properties \ref{p:tensorii} and \ref{p:tensori} are immediate
from the definition of $K$.
Finally, since $r^* = m$ is even, 
for every $u \in \R^n$, we have
\begin{align}
\label{eq:20160209a}
\nonumber\bigg\lVert \sum_{i=1}^n u_i \Phi(x_i) \bigg\rVert^{r^*}_{r^*} 
\nonumber&= \sum_{k \in \KK} \bigg( \sum_{i=1}^n u_i \phi_k(x_i) \bigg)^{m}\\
\nonumber&= \sum_{k \in \KK} \sum_{i_1,\dots, i_m = 1}^n \phi_k(x_{i_1})\cdots \phi_k(x_{i_m}) u_{i_1} \dots u_{i_m} \\
&=\sum_{i_1,\dots, i_m = 1}^n \bigg(\sum_{k \in \KK}  \phi_k(x_{i_1})\cdots \phi_k(x_{i_m}) \bigg) u_{i_1} \dots u_{i_m}\,.
\end{align}
Therefore, recalling the definition of $K$,  \ref{p:tensorv} and \ref{p:tensoriv} follow.
\end{proof}

\begin{remark}
Let ${(x_i)}_{1 \leq i \leq n}\in \XC^n$. Then
$\KKK:= ( K(x_{i_1}, \dots, x_{i_m}))_{i \in \{1,\dots n\}^m}$ defines a 
tensor of degree $m$ on $\R^n$. Then,
properties \ref{p:tensorii} and \ref{p:tensorv} 
 establish that the tensor is symmetric and positive definite:
they are natural generalization of 
the defining properties of standard positive (matrix) kernels.
\end{remark}

Because of Proposition~\ref{p:tensor}\ref{p:tensoriii},
tensor kernels, as defined in \eqref{eq:20150525b}, can be normalized
as for the matrix kernels.

\begin{proposition}[normalized tensor kernel]
Let $K$ be defined as in \eqref{eq:20150525b} and suppose that,
for every $x \in \XC$, $K(x, \dots, x)>0$. Define
\begin{equation}
\begin{aligned}
\tilde{K}\colon \XC^m\ &\to \R, \\
(x^\prime_{1}, \dots, x^\prime_{m}) &\mapsto
\frac{K(x^\prime_{1}, \dots, x^\prime_{m})}
{{K(x^\prime_{1}, \dots, x^\prime_{1})}^{1/m} \cdots 
{K(x^{\prime}_{m}, \dots, x^{\prime}_{m})}^{1/m}}.
\end{aligned}
\end{equation}
Then $\tilde{K}$ is still of type \eqref{eq:20150525b}, for some
family of functions  ${(\tilde{\phi}_k)}_{k \in \KK}$, $\tilde{\phi}_k\colon \XC \to \R$,
and the following hold.
\begin{enumerate}[$(i)$]
\item For every $x \in \XC$, $\tilde{K}(x,\dots, x) = 1$.
\item For every 
$(x^\prime_{1}, \dots x^\prime_{m})
\in \XC^m$, 
$\abs{\tilde{K}(x^\prime_{1}, \dots x^\prime_{m})} \leq 1$.
\end{enumerate}

\end{proposition}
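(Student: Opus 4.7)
The plan is to exhibit the normalized feature family explicitly and then read off all three properties from what is already available. Concretely, I would define
\begin{equation*}
(\forall\, k \in \KK)(\forall\, x \in \XC)\qquad \tilde{\phi}_k(x) = \frac{\phi_k(x)}{K(x,\dots,x)^{1/m}},
\end{equation*}
which is well-defined thanks to the hypothesis $K(x,\dots,x)>0$ for every $x \in \XC$.

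The first thing to check is that this family fits the framework of \eqref{eq:20160225a}, i.e.\ that for every $x \in \XC$ the sequence $(\tilde{\phi}_k(x))_{k \in \KK}$ lies in $\ell^{r^*}\!(\KK) = \ell^m(\KK)$. Since $m$ is even, $\phi_k(x)^m = \abs{\phi_k(x)}^m$, so by the very definition of $K$,
\begin{equation*}
K(x,\dots,x) = \sum_{k \in \KK}\phi_k(x)^m = \sum_{k \in \KK}\abs{\phi_k(x)}^m,
\end{equation*}
and dividing by $K(x,\dots,x)$ yields $\sum_{k \in \KK}\abs{\tilde{\phi}_k(x)}^m = 1$. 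In particular the associated tensor kernel of type \eqref{eq:20150525b} is well defined for this family, and a direct computation shows that it coincides with $\tilde{K}$: the $m$-fold product $\tilde{\phi}_k(x_1')\cdots\tilde{\phi}_k(x_m')$ introduces precisely the denominator $K(x_1',\dots,x_1')^{1/m}\cdots K(x_m',\dots,x_m')^{1/m}$, and summing over $k$ reproduces the definition of $\tilde{K}$. This proves (i).

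Items (ii) and (iii) are then immediate. For (ii), substituting $x_1' = \dots = x_m' = x$ into the defining formula of $\tilde{K}$ gives numerator and denominator both equal to $K(x,\dots,x)$. For (iii), the inequality is nothing but Proposition~\ref{p:tensor}\ref{p:tensoriii} divided through by the (strictly positive) denominator appearing in the definition of $\tilde{K}$. I do not foresee any real obstacle here; the only subtle point worth being explicit about is the use of the parity of $m$ to identify $K(x,\dots,x)$ with the $m$-th power of the $\ell^m$-norm of $(\phi_k(x))_k$, which is what makes the normalization a genuine renormalization of the feature map rather than just a cosmetic rescaling of $K$.
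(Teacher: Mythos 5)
Your proof is correct and takes essentially the same approach as the paper: exhibit the normalized family $\tilde{\phi}_k(x) = \phi_k(x)/K(x,\dots,x)^{1/m} = \phi_k(x)/\norm{\Phi(x)}_m$ and read off the claims directly. In fact your normalizing constant is the right one; the paper's own one-line proof writes $\tilde{\phi}_k(x)=\phi_k(x)/\norm{\Phi(x)}_m^m$, which is evidently a typo for $\phi_k(x)/\norm{\Phi(x)}_m$, since only the latter produces the denominator ${K(x^\prime_{1},\dots,x^\prime_{1})}^{1/m}\cdots{K(x^\prime_{m},\dots,x^\prime_{m})}^{1/m}$ in the definition of $\tilde{K}$.
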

\begin{proof}
Just note that, for every $x \in \XC$, $\norm{\Phi(x)}_m^m = K(x, \cdots, x)>0$.
Then define $\tilde{\phi}_k(x) = \phi_k(x)/\norm{\Phi(x)}_m^m$.
\end{proof}

\begin{remark}\label{rmk:20150527b}
The homogeneous polynomial form \eqref{eq:20150527a} can be written as follows
\begin{equation}\label{eq:20160208a}
\sum_{\substack{\alpha \in \N^n \\ \abs{\alpha}=m}} 
\binom{m}{\alpha} 
K(\underbrace{x_1, \dots, x_1}_{\alpha_1}, \dots, \dots, \underbrace{x_n, \dots , x_n}_{\alpha_n})
 u^\alpha
 \end{equation}
where, for every multi-index $\alpha = (\alpha_1, \dots, \alpha_n) \in \N^n$ and
for every vector $u \in \R^n$, we used the standard notation $u^\alpha = u_1^{\alpha_1} \cdots u_n^{\alpha_n}$,
$\abs{\alpha} = \sum_{i=1}^n \alpha_i$, and the multinomial coefficient
\begin{equation}
\binom{m}{\alpha} = \binom{m}{\alpha_1, \dots, \alpha_n} =  \frac{m!}{\alpha_1!\dots \alpha_n!}.
\end{equation}
Indeed it follows from \eqref{eq:20160209a} and the multinomial theorem 
\cite[Theorem~4.12]{Bona11}
that
\begin{equation*}
\begin{aligned}
 \bigg\lVert \sum_{i=1}^n u_i \Phi(x_i) \bigg\rVert^{r^*}_{r^*}
&=  \sum_{k \in \KK} 
\bigg(\sum_{i=1}^n  u_i \phi_k(x_i) \bigg)^m\\
& = \sum_{k \in \KK} 
\sum_{\substack{\alpha \in \N^n \\ \abs{\alpha}=m}} 
\binom{m}{\alpha}  \phi_k(x_1)^{\alpha_1}\dots \phi_k(x_n)^{\alpha_n} 
u^\alpha \\
& = \sum_{\substack{\alpha \in \N^n \\ \abs{\alpha}=m}} 
\binom{m}{\alpha} 
\bigg(\sum_{k \in \KK} \phi_k(x_1)^{\alpha_1}\dots \phi_k(x_n)^{\alpha_n} 
\bigg) u^\alpha.
\end{aligned}
\end{equation*}
Thus \eqref{eq:20160208a} follows from \eqref{eq:20150525b}.
\end{remark}

We present the  main result of the section,
which is a direct consequence of Theorem~\ref{thm:pridu} and 
Proposition~\ref{p:tensor}.

\begin{theorem}
\label{thm2}
Under the setting \eqref{eq:20160225a}-\eqref{eq:20170309a} described above, 
with $r^*=m$ even integer, 
let $K$ be defined as in \eqref{eq:20150525b}.
Then
the dual problem $(\mathcal{D}_n)$ can be written as the following 
finite dimensional optimization problem
\begin{equation}
\label{eq:20150526d}
\left[
\begin{aligned}
&\min_{ u \in \R^n} 
\varphi^*\bigg( \frac{1}{n}
\bigg(  \sum_{i_1,\dots, i_m = 1 }^n
 K(x_{i_1}, \dots, x_{i_m}) 
 u_{i_1} \dots u_{i_m}\bigg)^{1/r^*} \bigg)
+ \frac{\gamma}{n} \sum_{i=1}^n 
L^*\bigg(\frac{u_i}{\gamma}\bigg) - \frac{1}{n}\sum_{i=1}^n y_i u_i\\[1.5ex]
&\text{subject to } \sum_{i=1}^n u_i = 0.
\end{aligned}
\right.
\end{equation}
Moreover, in the related optimality conditions \eqref{eq:optPDn},
the first equation turns to
\begin{equation}
\label{eq:20170313a}
w \in \frac{\partial\varphi^*(\frac{1}{n}\KKK[u]^{1/r^*})}{\KKK[u]^{1/r}}
J_{m} \bigg( \sum_{i=1}^n u_i\Phi(x_i) \bigg), 
\end{equation}
where $\KKK[u]:= \sum_{i_1,\dots, i_m = 1}^n K(x_{i_1}, \dots, x_{i_m}) u_{i_1} \dots u_{i_m}$,
 $J_m\colon \ell^{m}\!(\KK) \to \ell^{r}(\KK)\colon u \mapsto {( u_k^{m-1})}_{k \in \N}$,
and the right hand side  \eqref{eq:20170313a} 
is meant to be $\{0\}$ when $\KKK[u]=0$.
\end{theorem}

\begin{remark}\
\label{rmk:20170315e}
\begin{enumerate}[$(i)$]
\item Problem \eqref{eq:20150526d} is a 
 convex optimization problem.
If the tensor kernel $K$
is explicitly computable by means of \eqref{eq:20150525b},
the dual problem $\eqref{eq:20150526d}$ is a very finite dimensional problem,
in the sense that it does not involve the feature map anymore. This is exactly how
the kernel trick works within the matrix kernels.
\item Once a solution $u \in \R^n$ of the dual problem 
\eqref{eq:20150526d} is computed, the solutions of the primal problem
$(\mathcal{P}_n)$ are given by
\begin{equation}
\label{eq:20170315a}
\left\{
\begin{aligned}
&(\forall\, k \in \N)\quad w_k= \xi(u) 
\bigg( \sum_{i=1}^n u_i \phi_k(x_i) \bigg)^{m-1}\!\!, 
\qquad \xi(u) \in \frac{\partial \varphi^*(\frac{1}{n}\KKK[u]^{1/r^*})}{\KKK[u]^{1/r}},\\
&b  \in \bigcap_{j=1}^n \bigg[  y_j - \pair{w}{\Phi(x_j)}_{r,r^*} -  
\partial L^*\bigg( \frac{u_j}{\gamma} \bigg) \bigg],
\end{aligned}
\right.
\end{equation}
where $\xi(u)=0$ if $\KKK[u]=0$.
\item\label{rmk:20170315eiii} If there exists $j$
such that $L^*$ is differentiable at $u_j/\gamma$, the second of \eqref{eq:20170315a}
uniquely determines $b$ as 
$b = y_j - \pair{w}{\Phi(x_j)}_{r,r^*} -  
 (L^*)^\prime\big( u_j/\gamma \big)$.
\end{enumerate}
\end{remark}

\begin{corollary}
\label{cor:dualLr}
In Theorem~\ref{thm2}, let $\varphi = (1/r) \abs{\cdot}^r$ 
(which gives $G = (1/r) \norm{\cdot}^r_r$). Then the dual  
problem \eqref{eq:20150526d} becomes
\begin{equation}
\label{eq:20160127b}
\left[
\begin{aligned}
&\min_{ u \in \R^n} 
\frac{1}{r^* n^{r^*}} 
 \sum_{i_1,\dots, i_m = 1 }^n
 K(x_{i_1}, \dots, x_{i_m}) 
 u_{i_1} \dots u_{i_m} 
 + \frac{\gamma}{n} \sum_{i=1}^n L^*\bigg(\frac{u_i}{\gamma}\bigg) 
 - \frac{1}{n} \sum_{i=1}^n y_i u_i\\[1.5ex]
&\text{subject to } \sum_{i=1}^n u_i = 0.
\end{aligned}
\right.
\end{equation}
Moreover, if $u$ is a solution of \eqref{eq:20160127b} and there exists
$j$ such that $L^*$
 is differentiable at $u_j/\gamma$,
then the primal problem $(\mathcal{P}_n)$ has a unique solution which is given by
\begin{equation}
(\forall\, k \in \N)\quad w_k = \frac{1}{n^{m-1}} \bigg( \sum_{i=1}^n u_i \phi_k(x_i) \bigg)^{m-1}
\quad \text{and}\quad b = y_j - \pair{w}{\Phi(x_j)}_{r,r^*} -  
 (L^*)^\prime\bigg( \frac{u_j}{\gamma} \bigg).
\end{equation}
\end{corollary}
\begin{proof}
Just note that $\varphi^* = (1/r^*) \abs{\cdot}^{r^*}$ and apply Theorem~\ref{thm2}
and Remark~\ref{rmk:20170315e}\ref{rmk:20170315eiii}.
\end{proof}

\begin{remark}\
\begin{enumerate}[$(i)$]
\item
The first term in the objective function in \eqref{eq:20160127b} is a positive definite homogeneous polynomial 
of order $m$. So, if the function $L^*$ is smooth,
 which occurs when $L$ is strictly convex,
 then the dual problem \eqref{eq:20160127b} is a smooth convex optimization
 problem with a linear constraint and can be approached 
 by standard optimization techniques such as Newton-type
 or gradient-type
 methods --- in the case of square loss, the dual problem \eqref{eq:20160127b} is 
 a  convex polynomial optimization problem and possibly
 more appropriate optimization methods may be employed.
 \item 
When \eqref{eq:20160127b} is specialized to the case of 
 $\varepsilon$-insensitive loss (see Example~\ref{ex:epsinsensitive})
 we obtain
 \begin{equation}
\label{eq:20160303a}
\left[
\begin{aligned}
&\min_{ u \in \R^n} 
\frac{1}{m n^{m}}  
 \sum_{i_1,\dots, i_m = 1 }^n
 K(x_{i_1}, \dots, x_{i_m}) 
 u_{i_1} \dots u_{i_m} 
 + \frac{\varepsilon}{n} \sum_{i=1}^n \abs{u_i}
 - \frac{1}{n} \sum_{i=1}^n y_i u_i\\[1.5ex]
&\text{subject to } \sum_{i=1}^n u_i = 0\quad\text{and}\quad 
\abs{u_i} \leq \gamma\ \text{for every}\ i \in \{1,\dots,n\},
\end{aligned}
\right.
\end{equation}
which is clearly a generalization of \eqref{eq:20170221b}.
\end{enumerate}
\end{remark}

The next issue is to evaluate the regression function 
 corresponding to $(w,b)$ at a general input point, without
the explicit knowledge of the feature map but relying on the 
tensor-kernel $K$ only. 
The following proposition shows that
a tensor-kernel representation holds and hence
 the kernel trick
is fully viable in our more general situation.

\begin{proposition}
\label{p:20160302a}
Under the assumptions \eqref{eq:20160225a}-\eqref{eq:20170309a},
let $K$ be defined as in \eqref{eq:20150525b}.
Suppose that the primal problem $(\mathcal{P}_n)$ has solutions.
Let $u \in \R^n$ be a solution of the dual problem \eqref{eq:20150526d} and 
let $(w,b)$ be a solution of $(\mathcal{P}_n)$ determined as in \eqref{eq:20170315a}.
Then, 
\begin{equation}
\label{eq:20160301h}
\left\{
\begin{aligned}
&\pair{w}{\Phi(x)}_{r,r^*} 
= \xi(u)
\sum_{ i_1,\dots, i_{m-1} = 1}^n
K(x_{i_1}, \dots, x_{i_{m-1}}, x)
u_{i_1} \cdots u_{i_{m-1}},\quad \forall\,x \in \XC,\\
&b \in \bigcap_{j=1}^n \bigg[ y_j - \xi(u)
\sum_{ i_1,\dots, i_{m-1} = 1}^n
K(x_{i_1}, \dots, x_{i_{m-1}}, x_j)
u_{i_1} \cdots u_{i_{m-1}}
- \partial L^*\bigg(\frac{u_j}{\gamma}\bigg) \bigg],
\end{aligned}
\right.
\end{equation}
where
 $\xi(u) \in \KKK[u]^{-1/r} \partial \varphi^*(\frac{1}{n}\KKK[u]^{1/r^*})$ if $\KKK[u]\neq 0$
and $\xi(u)=0$ if $\KKK[u]=0$, and $\KKK[u]$ is defined as in Theorem~\ref{thm2}.
\end{proposition}
\begin{proof}
Let $x \in \XC$. Then, we derive from \eqref{eq:20170315a} that
\begin{align*}
 \pair{w}{\Phi(x)}_{r,r^*} 
&=  \sum_{k \in \KK} w_k \phi_k(x) \\
 &= \xi(u) \sum_{k \in \KK} 
\bigg( \sum_{i=1}^n u_i \phi_k(x_i) \bigg)^{m-1}\!\!\! \phi_k(x)\\
& = \xi(u) \sum_{k \in \KK}
\sum_{i_1,\dots, i_{m-1} = 1}^n
\phi_k(x_{i_1}) \cdots \phi_k(x_{i_{m-1}}) \phi_k(x) 
u_{i_1} \cdots u_{i_{m-1}}\\
&= \xi(u) \sum_{i_1,\dots, i_{m-1} = 1}^n
K(x_{i_1}, \dots, x_{i_{m-1}}, x)
u_{i_1} \cdots u_{i_{m-1}},
\end{align*}
where we used the definition \eqref{eq:20150525b} of $K$.
The second equation in \eqref{eq:20160301h} follows from the 
second equation in \eqref{eq:20170315a}.
\end{proof}

\begin{remark}
In the case treated in Corollary~\ref{cor:dualLr}, assuming that
there exists $j$ such that $L^*$ is differentiable at $u_j/\gamma$, 
\eqref{eq:20160301h} yields the following representation formula
\begin{multline*}
\pair{w}{\Phi(x)}_{r,r^*} +b
= \frac{1}{n^{m-1}} \sum_{ i_1,\dots, i_{m-1} = 1}^n
\big( K(x_{i_1}, \dots, x_{i_{m-1}}, x)- K(x_{i_1}, \dots, x_{i_{m-1}}, x_j) \big)
u_{i_1} \cdots u_{i_{m-1}}\\
+ y_j - (L^*)^\prime\bigg(\frac{u_j}{\gamma}\bigg),
\end{multline*}
which generalizes \eqref{eq:20170315c}.
Moreover, if in model \eqref{eq:20160225b} we assume no offset  ($b=0$), then we can avoid the requirement of the differentiability of $L^*$ and the representation formula becomes
\begin{equation*}
\pair{w}{\Phi(x)}_{r,r^*} 
= \frac{1}{n^{m-1}} \sum_{ i_1,\dots, i_{m-1} = 1}^n
 K(x_{i_1}, \dots, x_{i_{m-1}}, x)
u_{i_1} \cdots u_{i_{m-1}}.
\end{equation*}
\end{remark}

Concluding we have shown that, the estimated regression function
can be evaluated at every point of the input space by means of 
a finite summation formula, provided that
 the tensor-kernel $K$ is explicitly available: we will show in Section~\ref{secPSTK} several
significant examples in which this occurs.

\subsection{The complex case}
\label{sec:complex}
In this section we give the complex version of the theory developed in 
Section~\ref{subsec:real}.
Therefore, we let $\displaystyle\FF = \ell^r(\KK;\C)$, with $\KK$ a countable set and 
$r = m/(m-1)$ for some even integer  $m\geq 2$. 
Let $(\phi_k)_{k \in \KK}$ be a family of measurable functions from $\XC$ to $\C$ such that,
for every $x \in \XC$, $\displaystyle {(\phi_k(x))}_{k \in \KK} \in \ell^{r^*}\!(\KK;\C)$.
The feature map is now defined as
\begin{equation}
\label{eq:20160226a}
\Phi\colon \XC \to \ell^{r^*}\!(\KK;\C)\colon x \mapsto {(\overline{\phi_k(x)})}_{k \in \KK},
\end{equation}
which generates the model 
\begin{equation}
(\forall\, w \in \ell^r(\KK;\C))(\forall\, b \in \C)\qquad 
x\mapsto \pair{w}{\Phi(x)}_{r,r^*} + b= \sum_{k \in \KK} w_k \phi_k(x) + b,
\end{equation}
where $\pair{w}{w^*}_{r,r^*} = \sum_{k \in \N} w_k \overline{w_k^*}$ is the
canonical sesquilinear form between $\ell^r(\KK;\C)$ and $\ell^{r^*}(\KK;\C)$.
This case can be treated as a vector-valued real case
by identifying complex functions with $\R^2$-valued functions and 
the space
$\ell^r(\KK;\C)$ with $\ell^r(\KK;\R^2)$.
Moreover, it is not difficult to generalize the dual framework
presented in Section~\ref{sec:GenSVR} to the case of vector-valued  
(and specifically to $\R^2$-valued) functions.
Then, the (complex) feature map \eqref{eq:20160226a} defines an underlying real 
vector-valued feature map on $\ell^r(\KK;\R^2)$ \cite{Com15}, that is
\begin{equation}
\Phi_\R\colon \XC \to \mathcal{L}(\R^2,\ell^{r^*}\!(\KK;\R^2)) 
\approxeq \ell^{r^*}\!(\KK;\R^{2\times 2})\colon x \mapsto {(\phi_{\R,k}(x))}_{k \in \KK},
\end{equation}
where $\mathcal{L}(\R^2,\ell^{r^*}\!(\KK;\R^2))$ is the spaces of linear continuous operators
from $\R^2$ to $\ell^{r^*}\!(\KK;\R^2)$ (which is isomorphic to $\ell^{r^*}\!(\KK;\R^{2\times 2})$)
and
\begin{equation}
(\forall\, x \in \XC)(\forall\, k \in \KK)\quad \phi_{\R,k}(x) =
\begin{bmatrix}
\mathfrak{Re} \phi_k(x) & \mathfrak{Im} \phi_k(x)\\
- \mathfrak{Im} \phi_k(x) & \mathfrak{Re} \phi_k(x)
\end{bmatrix} \in \R^{2\times 2}.
\end{equation}
This way, denoting, for every $x \in \XC$, by $\phi_{\R,k}(x)^*$ 
the transpose of the matrix $\phi_{\R,k}(x)$, we have
\begin{equation}
(\forall\, x \in \XC)(\forall\, k \in \KK)(\forall\, w_k \in \R^2\approxeq \C)\qquad \phi_{\R,k}(x)^* w_k 
= w_k \phi_k(x),
\end{equation}
hence $\Phi_\R(x)^* w = \pair{w}{\Phi(x)}_{r,r^*}$.
Moreover
\begin{equation}
(\forall\, x \in \XC)(\forall\, u \in \R^2\approxeq \C)\qquad
\Phi_\R(x) u = {(\phi_{\R,k}(x) u)}_{k \in \KK} 
 = {(u \overline{\phi_k(x)})}_{k \in \KK}  = u \Phi(x).
\end{equation}
Then, problems $(\mathcal{P}_n)$ and $(\mathcal{D}_n)$ become
\begin{equation}
\tag{$\mathcal{P}_n(\C)$}
\left[
\begin{aligned}
&\min_{(w,b,e) \in \ell^r(\KK;\C)\times\C \times \C^n }
\frac{\gamma}{n} \sum_{i=1}^n  \loss(e_i) + \varphi(\norm{w}_r),\\[1ex]
&\text{subject to}\ \ 
y_i -\pair{w}{\Phi(x_i)}_{r,r^*} - b  = e_i,
\quad \text{for every}\ i \in \{1,\dots, n\}
\end{aligned}
\right.
\end{equation}
and
\begin{equation}
\tag{$\mathcal{D}_n(\C)$}
\left[
\begin{aligned}
&\min_{ u \in \C^n} 
\varphi^*\bigg(\bigg\lVert \frac 1 n \sum_{i=1}^n u_i \Phi(x_i) \bigg\rVert_{r^*} \bigg) 
+ \frac{\gamma}{n} \sum_{i=1}^n L^*\bigg(\frac{u_i}{\gamma}\bigg) 
- \frac{1}{n} \sum_{i=1}^n \mathfrak{Re} (u_i \overline{y_i})\\[1.5ex]
&\text{subject to } \sum_{i=1}^n u_i = 0,
\end{aligned}
\right.
\end{equation}
where,
$L^*\colon \C \to \R\colon z^* \mapsto \sup_{z \in \C} \mathfrak{Re}(z \overline{z^*}) - L(z)$.
Moreover,
assuming that $w\neq 0$, the optimality conditions \eqref{eq:optPDn} still hold,
where now 
$J_{r^*} \colon \ell^{r^*}(\KK;\C) \to \ell^{r}\!(\KK;\C)\colon 
w^* \mapsto {(\abs{w^*_k}^{r-1} w^*_k/\abs{w^*_k})}_{k \in \KK}$,
and 
\begin{equation*}
(\forall\, e \in \C)\quad \partial L(e) = \big\{z^*\in \C~\big\vert~(\forall\, z \in \C)\ L(z) \geq L(e) 
+ \mathfrak{Re} \big(\overline{z^*}(z - e) \big)  \big\}.
\end{equation*}

In the following we give the result corresponding to Proposition~\ref{p:tensor}.

\begin{proposition}
\label{p:tensorC}
In the setting described above, suppose that $m$ is even and set $q=m/2$.
Then, the following function is well-defined
\begin{equation}
\label{eq:20150525bC}
K\colon \XC^q \times \XC^q \to \C\colon 
(x^\prime_1, \dots, x^\prime_q; x^{\prime\prime}_1, \dots, x^{\prime\prime}_q) 
\mapsto \sum_{k \in \KK}  
\phi_k(x^\prime_{1})\cdots \phi_k(x^\prime_{q})
\overline{\phi_k(x^{\prime\prime}_{1})}\cdots \overline{\phi_k(x^{\prime\prime}_{q})},
\end{equation}
and the following hold.
\begin{enumerate}[$(i)$]
\item\label{p:tensorCii}
For every 
$(x^\prime_1, \dots, x^\prime_q; x^{\prime\prime}_1, \dots, x^{\prime\prime}_q) 
\in \XC^q \times \XC^q$, and for every permutation $\sigma^\prime$ and 
$\sigma^{\prime\prime}$ of the indexes $\{1,\dots, q\}$,
\begin{equation*}
K(x^\prime_{\sigma^\prime(1)} \dots x^\prime_{\sigma^\prime(q)}; 
x^{\prime\prime}_{\sigma^{\prime\prime}(1)} \dots x^{\prime\prime}_{\sigma^{\prime\prime}(q)}) = 
K(x^\prime_{1}, \dots x^\prime_{q} ; x^{\prime\prime}_{1} \dots x^{\prime\prime}_{q}).
\end{equation*}
\item\label{p:tensorCiii}
For every 
$(x^\prime; x^{\prime\prime}) \in \XC^q\times \XC^q$
$K(x^\prime; x^{\prime\prime}) = \overline{K(x^{\prime\prime}; x^\prime)}$;
\item\label{p:tensorCvi}
For every $(x_i)_{1 \leq i \leq n} \in \XC^n$
\begin{equation*}
(\forall\, u \in \C^n)
\quad\sum_{\substack{i_1,\dots, i_q = 1 \\ \ii_1,\dots, \ii_q = 1}}^n
 K(x_{\ii_1}, \dots, x_{\ii_q};x_{i_1}, \dots, x_{i_q}) 
 u_{i_1} \dots u_{i_q} \overline{u_{\ii_1}} \dots \overline{u_{\ii_q}} \geq 0\,.
\end{equation*}
\item\label{p:tensorCv}
For every $(x_i)_{\ \leq i \leq n} \in \XC^n$
\begin{equation*}
u \in \C^n \mapsto \bigg\lVert \sum_{i=1}^n u_i \Phi(x_i) \bigg\rVert^{r^*}_{r^*}
= \sum_{\substack{i_1,\dots, i_q = 1 \\ \ii_1,\dots, \ii_q = 1}}^n
 K(x_{\ii_1}, \dots, x_{\ii_q}; x_{i_1}, \dots, x_{i_q}) 
 u_{i_1} \dots u_{i_q} \overline{u_{\ii_1}} \dots \overline{u_{\ii_q}}
\end{equation*}
is a positive homogeneous polynomial form of degree $m$ on $\C^n$.
\item\label{p:tensorCi} 
For every $(x^\prime_1, \dots, x^\prime_q) \in \XC^q$, 
$K(x^\prime_1, \dots, x^\prime_q;x^\prime_1, \dots, x^\prime_q) \geq 0$;
\item\label{p:tensorCiv} 
For every $(x^\prime_1, \dots, x^\prime_q; x^{\prime\prime}_1, \dots, x^{\prime\prime}_q)  
\in \XC^q \times \XC^q$,
\begin{equation*}
\abs{K(x^\prime_1, \dots, x^\prime_q; x^{\prime\prime}_1, \dots, x^{\prime\prime}_q)} 
\leq K(x^\prime_1, \dots, x^\prime_1; x^\prime_1, \dots, x^\prime_1)^{1/m} 
\cdots K(x^{\prime\prime}_q, \dots, x^{\prime\prime}_q; x^{\prime\prime}_q, \dots, x^{\prime\prime}_q)^{1/m}.
\end{equation*}
\end{enumerate}
\end{proposition}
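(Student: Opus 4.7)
The plan is to mirror the proof of Proposition~\ref{p:tensor}, handling the complex conjugation bookkeeping carefully by exploiting the evenness of $m$ through the factorization $\abs{z}^m = z^{q}\bar z^{q}$ with $q = m/2$.

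First, I would establish well-definedness of \eqref{eq:20150525bC}. Since $(\phi_k(x))_{k \in \KK}\in \ell^{r^*}\!(\KK;\C) = \ell^m(\KK;\C)$, the sequences $(|\phi_k(x'_j)|)_k$ and $(|\phi_k(x''_j)|)_k$ all lie in $\ell^m_+(\KK)$. Applying Proposition~\ref{p:20150525b} with the $m = 2q$ factors $|\phi_k(x'_1)|,\dots,|\phi_k(x'_q)|,|\phi_k(x''_1)|,\dots,|\phi_k(x''_q)|$ gives absolute summability of the series in \eqref{eq:20150525bC} together with the bound
\begin{equation*}
\sum_{k \in \KK}\abs{\phi_k(x'_1)\cdots\phi_k(x'_q)\overline{\phi_k(x''_1)}\cdots \overline{\phi_k(x''_q)}}
\leq \prod_{j=1}^q \bigg(\sum_{k\in\KK}\abs{\phi_k(x'_j)}^m\bigg)^{1/m}\!\prod_{j=1}^q\bigg(\sum_{k\in\KK}\abs{\phi_k(x''_j)}^m\bigg)^{1/m}.
\end{equation*}
Since $\abs{\phi_k(x)}^m = \phi_k(x)^q\overline{\phi_k(x)}^q$, each inner factor equals $K(x,\dots,x;x,\dots,x)$, so \ref{p:tensorCiv} drops out. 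Properties \ref{p:tensorCii} and \ref{p:tensorCiii} are then immediate: \ref{p:tensorCii} follows from commutativity of complex multiplication, and \ref{p:tensorCiii} from $\overline{\phi_k(x')\,\overline{\phi_k(x'')}} = \phi_k(x'')\,\overline{\phi_k(x')}$ applied term by term.

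Next, I would prove \ref{p:tensorCv} (from which \ref{p:tensorCvi} is immediate because $\norm{\cdot}_{r^*}^{r^*} \geq 0$, and \ref{p:tensorCi} is the special case $n=1$, $u=1$). The key computation is to expand the $\ell^m$-norm using $m = 2q$: by the definition \eqref{eq:20160226a} of $\Phi$,
\begin{equation*}
\bigg\lVert \sum_{i=1}^n u_i\Phi(x_i)\bigg\rVert_{r^*}^{r^*} = \sum_{k\in\KK}\bigg|\sum_{i=1}^n u_i\overline{\phi_k(x_i)}\bigg|^{2q} = \sum_{k\in\KK}\bigg(\sum_{i=1}^n u_i\overline{\phi_k(x_i)}\bigg)^{\!q}\bigg(\sum_{\ii=1}^n \overline{u_\ii}\phi_k(x_\ii)\bigg)^{\!q}.
\end{equation*}
Expanding the two $q$-th powers as iterated sums over $(i_1,\dots,i_q)$ and $(\ii_1,\dots,\ii_q)$, exchanging the (finite) polynomial sum with the series in $k$ (justified by the absolute summability established above), and recognizing $\sum_{k}\phi_k(x_{\ii_1})\cdots\phi_k(x_{\ii_q})\overline{\phi_k(x_{i_1})}\cdots\overline{\phi_k(x_{i_q})}$ as $K(x_{\ii_1},\dots,x_{\ii_q};x_{i_1},\dots,x_{i_q})$ gives exactly the polynomial form in \ref{p:tensorCv}.

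The main subtlety is bookkeeping for the conjugation pattern: because $\Phi$ places the bar on $\phi_k(x)$, the primed (un-conjugated) slots of $K$ end up indexed by the $\ii_\bullet$ variables and the double-primed (conjugated) slots by the $i_\bullet$ variables, matching the statement. No deep new idea is needed beyond what appears in Proposition~\ref{p:tensor}; the only genuine change is using $m = 2q$ to split modulus powers into a holomorphic and an antiholomorphic factor, which is precisely why the proposition assumes $m$ even.
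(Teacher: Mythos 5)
Your proposal is correct and follows exactly the route the paper intends: the paper omits the proof of Proposition~\ref{p:tensorC} (stating that complex-case results are given with sketched proofs only) and relies on the reader mirroring the argument of Proposition~\ref{p:tensor}, which is precisely what you do, with the only genuinely new ingredient being the split $\abs{z}^{m}=z^{q}\overline{z}^{q}$ that converts the modulus power into holomorphic and antiholomorphic factors. All six items are correctly reduced to the generalized H\"older inequality of Proposition~\ref{p:20150525b} and the expansion of $\lVert\sum_i u_i\Phi(x_i)\rVert_{m}^{m}$, with the conjugation bookkeeping matching the statement.
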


\begin{remark}
Item \ref{p:tensoriv} states that 
$\KKK:=\big(K(x_{i_1}, \dots, x_{i_m})\big)_{i \in \{1, \dots, n\}^m }$ is a
positive-definite tensor of degree $m$.
\end{remark}

As in the real case, the dual problem $(\mathcal{D}_n)$ reduces to
\begin{equation*}
\left[
\begin{aligned}
&\min_{ u \in \C^n} 
\varphi^*\bigg( \frac{1}{n}
\bigg(  \sum_{\substack{i_1,\dots, i_q = 1 \\ \ii_1,\dots, \ii_q = 1}}^n
 K(x_{\ii_1}, \dots, x_{\ii_q}, x_{i_1}, \dots, x_{i_q}) 
 u_{i_1} \dots u_{i_q} \overline{u_{\ii_1}} \dots \overline{u_{\ii_q}}\bigg)^{1/r^*} \bigg)\\
&\hspace{50ex}+ \frac{\gamma}{n} \sum_{i=1}^n L^*\bigg(\frac{u_i}{\gamma}\bigg) 
- \frac{1}{n}\mathfrak{Re}\sum_{i=1}^n \overline{y_i} u_i\\[1.5ex]
&\text{subject to } \sum_{i=1}^n u_i = 0
\end{aligned}
\right.
\end{equation*}
and 
the homogeneous polynomial form in Proposition~\ref{p:tensorC}\ref{p:tensorCv} can be written as follows
\begin{equation}
\label{eq:20160301e}
\sum_{\substack{\alpha \in \N^n, \beta \in \N^n \\ \abs{\alpha}=q, \abs{\beta}=q}} 
\binom{q}{\alpha} \binom{q}{\beta}  
K(\underbrace{x_1, \dots x_1}_{\alpha_1}, \dots \dots, \underbrace{x_n, \dots , x_n}_{\alpha_n},
\underbrace{x_1, \dots x_1}_{\beta_1}, \dots \dots, \underbrace{x_n, \dots , x_n}_{\beta_n})
 u^\alpha\, \overline{u}^\beta.
 \end{equation}
Finally, in the setting of Proposition~\ref{p:20160302a}, defining
\begin{equation}
\KKK[u] = \sum_{\substack{i_1,\dots, i_q = 1 \\ \ii_1,\dots, \ii_q = 1}}^n
 K(x_{\ii_1}, \dots, x_{\ii_q}, x_{i_1}, \dots, x_{i_q}) 
 u_{i_1} \dots u_{i_q} \overline{u_{\ii_1}} \dots \overline{u_{\ii_q}},
\end{equation}
for every $x \in \XC$, the following representation formulas hold
\begin{equation}
\label{eq:20160208d}
\begin{aligned}
\pair{w}{\Phi(x)}_{r,r^*} 
&= \xi(u)
\sum_{\substack{ i_1,\dots, i_{q} = 1\\\ii_1,\dots, \ii_{q-1} = 1}}^n
K(x_{\ii_1}, \dots x_{\ii_{q-1}},x; x_{i_1}, \dots x_{i_{q}})
u_{i_1} \cdots u_{i_{q}}
\overline{u_{\ii_1}} \cdots \overline{u_{\ii_{q-1}}}\\
\xi(u)&\in \frac{\partial \varphi^*( \frac{1}{n}\KKK[u]^{1/r^*})}{\KKK[u]^{1/r}}\\
b&= y_j - \pair{w}{\Phi(x_j)}_{r,r^*}  - \nabla L^*\bigg(\frac{u_j}{\gamma}\bigg).
\end{aligned}
\end{equation}
where we assumed that $L^*$ is differentiable, as a function from $\R^2$ to $\R$, at some $x_j/\gamma$.

\begin{remark}
In view of Proposition~\ref{p:tensorC}\ref{p:tensorCv}, definitions
\eqref{eq:20160226a} and \eqref{eq:20150525bC} 
correspond to those given in \cite[Lemma~4.2]{SteChi2008}
and the concept of positive definiteness stated in \ref{p:tensorCvi} is a natural 
generalization of
the analogue notion given in \cite[Definition~4.15]{SteChi2008}.
\end{remark}

\section{Power series tensor-kernels}
\label{secPSTK}

In this section we consider reproducing kernel Banach spaces
of complex  analytic functions
which are generated through power series. 
We show that, for such spaces,
 the corresponding tensor kernel, defined according to \eqref{eq:20150525b},
admits an explicit expression. We provide also representation formulas.
In this section we assume, for simplicity, that $\varphi = (1/r)\abs{\cdot}^r$,
with $r=m/(m-1)$ for some even integer $m \geq 2$.
therefore we address the support vector regression problem
\begin{equation*}
\min_{(w,b) \in \ell^r(\KK;\C)\times\C }
\frac{\gamma}{n} \sum_{i=1}^n  \loss\big(y_i -\pair{w}{\Phi(x_i)}_{r,r^*} - b \big) 
+ \frac 1 r \norm{w}_r^r,
\end{equation*}
for a specific choice of the feature map \eqref{eq:20160226a}.

We first need to set special notation for multi-index powers of complex vectors.
Let $d\in \N$ with $d\geq 1$. 
We will denote the component of a vector $x \in \C^d$,
by $x_t$, with $t \in \{1,\dots, d\}$.
For every $x \in \C^d$ and every $\nu \in \N^d$ we set 
\begin{equation*}
x^\nu = \prod_{t=1}^d x_t^{\nu_t}, \quad 
\abs{x} =(\abs{x_1}, \dots, \abs{x_d}),\quad
\text{and}\quad \nu! = \prod_{t=1}^d \nu_t!
\end{equation*}
so that $\forall\, \nu \in \N^d$ we have 
$\abs{x^\nu} = \prod_{t=1}^d \abs{x_t}^{\nu_t} = \abs{x}^\nu$.
Moreover, when the exponent of the vector $x \in \C^d$
is an index (not a multi-index), say $m \in \N$, we consider $m$ as a constant multi-index,
that is $(m,\dots, m)$, so that  $x^m$ means $\prod_{t=1}^d x_t^m$.
Finally, we define the binary inner operation of component-wise multiplication in $\C^d$. 
For every $x, x^\prime \in \C^d$, we define $x \odot x^\prime\in \C^d$ such that, 
for every $t \in \{1,\dots, d\}$, 
$(x \odot x^\prime)_t= x_t x^\prime_t$.
Let $m \in \N$ and $x \in \C^d$. We set $x^{\odot m} = x \odot\cdots \odot x$ ($m$-times),
so  that $x^{\odot m} \in \C^d$ and, for every $t \in \{1, \dots, d\}$, 
$(x^{\odot m})_t = x_t^m$.

Let $\rho = (\rho_\nu)_{\nu \in \N^d}$ be a multi-sequence in $\R_+$
and let
 $\XD_{\rho}$ be the domain of (absolute) convergence of the power series
$\sum_{\nu \in \N^d}  \rho_\nu z^\nu$, i.e., the interior of the set
$\big\{ z \in \C^d~\big\vert~\sum_{\nu \in \N^d} 
\rho_\nu \abs{z^\nu}< + \infty\big\}$.
The set $\XD_{\rho}$ is a complete Reinhardt domain\footnote{
It means that if $z \in \XD_\rho$, then $\XD_\rho$ contains 
the polydisk $\{t \in \C^d~\vert~(\forall\, j \in \{1,\dots, d\})\ \abs{t_j} \leq \abs{z_j}\}$.
}
and we assume that $\XD_{\rho} \neq \{0\}$.
Let $\kappa\colon \XD_{\rho} \to \C$ be the sum of the series 
$\sum_{\nu \in \N^d}  \rho_\nu z^\nu$, that is
\begin{equation*}
(\forall\, z \in \XD_{\rho})\quad \kappa(z) = \sum_{\nu \in \N^d}  \rho_\nu z^\nu.
\end{equation*}
Clearly $\kappa$
is an analytic function on $\XD_{\rho}\subset \C^d$. Set
\begin{equation*}
\XD_{\rho}^{\odot 1/m} 
= \big\{ x\in \C^d~\big\vert~x^{\odot m}= (x_1^m, \dots, x_d^m) \in\XD_{\rho} \big\},
\end{equation*}
let 
 $\XC\subset \XD_{\rho}^{\odot 1/m}$, and define
the dictionary
\begin{equation}
\label{eq:20160303b}
(\forall\, \nu \in \N^d)\qquad\phi_\nu\colon \XC \to \C\colon x \mapsto \rho_\nu^{1/m} x^\nu.
\end{equation}
Then, for every $x \in \XC$, since $x^{\odot m} \in \XD_{\rho}$, we have
\begin{equation*}
\sum_{\nu \in \N^d} \abs{\phi_\nu(x)}^m 
= \sum_{\nu \in \N^d} \rho_\nu {\abs{x^{\odot m}}}^{\nu}<+\infty,
\end{equation*}
hence $(\phi_\nu(x))_{\nu \in \N^d} \in \ell^{m}(\N^d;\C)$.
Thus, we are in the framework described at the beginning of 
Section~\ref{sec:complex}.
We define
\begin{equation*}
B_{\rho, b}^r(\XC) = \bigg\{ f \in \C^\XC \,\bigg\vert\, 
(\exists\, (c_\nu)_{\nu \in {\N}^d} \in \ell^r(\N^d;\C))(\exists\, b \in \C)(\forall\, x \in \XC) 
\Big(f(x) = \sum_{\nu \in \N^d} 
 c_\nu  \phi_\nu(x) + b \Big) \bigg\},
\end{equation*}
which is a reproducing kernel Banach spaces with norm
\begin{equation*}
\norm{f}_{B_{\rho,b}^r(\XC)} = \inf\Big\{  \norm{c}_r + \abs{b}~
\Big\vert~{(c_\nu)}_{\nu \in \N^d} \in 
\ell^r(\N^d;\C)\ \text{and}\ f=\sum_{\nu\in \N^d}  c_\nu \rho_\nu^{1/m} x^{\nu} + b\ \text{(pointwise)} \Big\}.
\end{equation*}
Suppose now that $b=0$ and that, for every $\nu \in \N^d$, $\rho_\nu>0$. Then,
defining the weights $(\eta_\nu)_{\nu \in \N^d} = (\rho_\nu^{-r/m})_{\nu \in \N^d}$
and the corresponding weighted $\ell^r$ space
\begin{equation*}
\ell^r_{\eta}(\N^d;\C) = \bigg\{ {(a_\nu)}_{\nu \in \N^d} \in \C^{\N^d}~\Big
\vert~\sum_{\nu \in \N^d} \frac{1}{\rho_\nu^{r/m}} \abs{a_\nu}^r<+\infty\bigg\},
\end{equation*}
we can express the space $B_{\rho,0}^r(\XC)$ in the form of a weighted
Hardy-like space \cite{Paul,You01}
\begin{equation*}
B_{\rho,0}^r(\XC) = \bigg\{ f \in \C^\XC \,\bigg\vert\, 
(\exists\, (a_\nu)_{\nu \in {\N}^d} \in \ell_\eta^r(\N^d;\C))(\forall\, x \in \XC) 
\Big(f(x) = \sum_{\nu \in \N^d} 
 a_\nu x^\nu \Big) \bigg\}.
\end{equation*}
Moreover, for every $(x^\prime_1,\dots, x^\prime_q, 
x^{\prime\prime}_1,\dots, x^{\prime\prime}_q) \in \XC^q\times \XC^q$,
\begin{equation}
\label{eq:20160301f}
K(x^\prime_1,\dots, x^\prime_q ;
x^{\prime\prime}_1,\dots, x^{\prime\prime}_q) 
= \sum_{\nu \in \N^d} \rho_\nu
x^{\prime \nu}_1\cdots x^{\prime \nu}_q
\overline{x^{\prime\prime \nu}_1}\cdots \overline{x^{\prime\prime \nu}_q}  
= \kappa(x^\prime_1\odot \cdots \odot x^\prime_q \odot 
\overline{x^{\prime\prime}_1}\odot \cdots \odot \overline{x^{\prime\prime}_q}).
\end{equation}

\begin{remark}
Suppose that $\rho_\nu>0$, for every $\nu \in \N^d$. Then
 $\sum_{\nu \in \N^d} c_\nu \rho_\nu^{1/m} x^{\nu} = 0$ (pointwise) implies
 $c_\nu \rho_\nu^{1/m}=0$, for every $\nu \in \N^d$ and hence 
 $c_\nu=0$, for every $\nu \in \N^d$. Thus, in virtue of Remark~\ref{rmk:schauder}
 this yields that 
 $(\phi_\nu)_{\nu \in \N^d}$ is an unconditional Schauder basis of $B_{\rho,0}^r(\XC)$
and that $B_{\rho,0}^r(\XC)$ is isometric to $\displaystyle \ell^r(\N^d;\C)$.
\end{remark}

\begin{proposition}
Under the notation and assumption above, 
suppose that $\XC$ is a compact subset of $D_{\rho}^{\odot 1/m}$
and that, for every $\nu \in \N^d$, $\rho_\nu>0$.
Then $B_{\rho,b}^r(\XC)$ is dense in $\mathscr{C}(\XC;\C)$, the space of continuous functions
on $\XC$ endowed with the uniform norm.
\end{proposition}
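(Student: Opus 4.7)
The plan is to apply the Stone--Weierstrass theorem, after showing that $B^r_{\rho,b}(\XC)$ contains all polynomials in $x_1,\dots,x_d$ together with the constants. The natural reading of the setup is that, as in the usual SVR framework, $\XC$ is a compact subset of $\R^d$ (embedded into $\XD_\rho^{\star 1/m}\subset\C^d$); I will make this explicit in the first step and then exploit the fact that on $\R^d$ the coordinate functions are self-conjugate, so the self-adjointness hypothesis of complex Stone--Weierstrass is automatic.

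First I would verify that every monomial belongs to $B^r_{\rho,b}(\XC)$. Fix $\nu\in\N^d$ and consider the sequence $c=(c_\mu)_{\mu\in\N^d}$ with $c_\nu=\rho_\nu^{-1/m}$ and $c_\mu=0$ for $\mu\neq\nu$; since $\rho_\nu>0$, this is well-defined and trivially lies in $\ell^r(\N^d;\C)$. Taking $b=0$ in the definition of the space yields the function $x\mapsto x^\nu$, so all monomials are in $B^r_{\rho,b}(\XC)$. Because only finitely many coefficients are nonzero in any finite linear combination, the full algebra $\C[x_1,\dots,x_d]$ of polynomials is contained in $B^r_{\rho,b}(\XC)$, and constants are recovered via the offset $b$.

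Next I would invoke Stone--Weierstrass. Let $\mathcal{A}\subset\mathscr{C}(\XC;\C)$ denote the algebra of polynomial functions on $\XC$. It contains the constants and separates points (the coordinate projections $x\mapsto x_t$ already do so). Since $\XC\subset\R^d$, each $x_t$ is real-valued on $\XC$, hence $\overline{x_t}=x_t$ on $\XC$, so $\mathcal{A}$ is closed under complex conjugation. The complex Stone--Weierstrass theorem then yields that $\mathcal{A}$ is uniformly dense in $\mathscr{C}(\XC;\C)$. Since $\mathcal{A}\subset B^r_{\rho,b}(\XC)\subset\mathscr{C}(\XC;\C)$ and convergence in $B^r_{\rho,b}(\XC)$ dominates the uniform norm on $\XC$ (as $\XC$ is compact in $\XD_\rho^{\star 1/m}$ and the pointwise evaluations are continuous on the RKBS), density of $\mathcal{A}$ forces density of $B^r_{\rho,b}(\XC)$ in $\mathscr{C}(\XC;\C)$.

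The main subtlety, and the step I expect to require the most care, is the self-adjointness hypothesis in Stone--Weierstrass: with $\XC$ taken as a compact subset of the complex Reinhardt domain $\XD_\rho^{\star 1/m}\subset\C^d$ the algebra of holomorphic polynomials is not conjugation-invariant, and the proposition as stated would fail (e.g.\ on a closed polydisk). Thus it is crucial that one interprets the input space $\XC$ as lying in $\R^d$ (the natural SVR input space), where the coordinates are real and conjugation-invariance is automatic. If instead one wishes to allow $\XC\subset\C^d$ with nontrivial complex structure, one would have to restrict the conclusion to density in the space of continuous functions that are uniform limits of holomorphic polynomials, i.e.\ to the polynomially convex hull.
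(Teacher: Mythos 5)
Your proof follows the same route as the paper's: observe that, since every $\rho_\nu>0$, the space contains each monomial $x\mapsto x^\nu$ and hence the whole polynomial algebra, then invoke Stone--Weierstrass. The one place you go beyond the paper is in flagging the self-adjointness hypothesis, and you are right to do so: the paper's one-line proof silently applies the complex Stone--Weierstrass theorem to the algebra of \emph{holomorphic} polynomials on a compact subset of $\C^d$, which is not closed under conjugation; on, say, a closed polydisk or the unit circle, uniform limits of polynomials in $z$ cannot produce $\overline{z}$, so the proposition as literally stated fails there. Your fix --- reading $\XC$ as a compact subset of $\R^d$ (the natural SVR input space), where the coordinate functions are self-conjugate --- is exactly the hypothesis needed to make the argument, and the conclusion, correct; the remainder of your write-up (monomials via a one-hot coefficient sequence in $\ell^r(\N^d;\C)$, separation of points by coordinates, constants via the offset, and the trivial ``superset of a dense set is dense'' step) is sound.
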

\begin{proof}
It is enough to note that $B_{\rho,b}^r(\XC)$ contains the set
\begin{equation*}
\mathcal{A} = \mathrm{span}\big\{ \phi_\nu~\big\vert~\nu \in \N\big\}
= \bigg\{ \sum_{\nu \in I} c_\nu x^\nu~\Big\vert~I\subset \N^d\ \text{and}\ I\ \text{finite}
\ {(c_\nu)}_{\nu \in I}\in \C^{I} \bigg\}
\end{equation*}
which is the algebra of polynomials on $\XC$ in $d$ variables with complex coefficients.
Thus the statement is a consequence of the Stone-Weierstrass theorem.
\end{proof}

In the sequence we also assume that the offset $b$ is zero.
Because of \eqref{eq:20160301f}, the representation given
in \eqref{eq:20160301e} yields the following homogenous polynomial form
\begin{equation}
\label{eq:20160301b}
u \in \C^n \mapsto \bigg\lVert \sum_{i=1}^n u_i \Phi(x_i) \bigg\rVert^{r^*}_{r^*} =
\sum_{\substack{\alpha \in \N^n, \beta \in \N^n\\ \abs{\alpha}=q, \abs{\beta}=q}} 
\binom{q}{\alpha} \binom{q}{\beta} 
\kappa(x_1^{\odot \beta_1}\odot\cdots \odot x_n^{\odot \beta_n} \odot
\overline{x}_1^{\odot \alpha_1}\odot\cdots \odot \overline{x}_n^{\odot \alpha_n}) 
u^\alpha \overline{u}^\beta,
\end{equation}
where $(x_i)_{1 \leq i \leq n} \in \XC^n$ is the training set
and, according to the convention established at the beginning of the section, 
$x_i^{\odot \alpha_i} =( x_{i,1}^{\alpha_i}, \dots, x_{i,d}^{\alpha_i})$.
Moreover, in this case, recalling \eqref{eq:20160208d} 
and \eqref{eq:20160301f}, for every $x \in \XC$,
we have
\begin{equation}
\label{eq:20160301c}
\pair{w}{\Phi(x)}_{r,r^*}
= \frac{1}{n^{m-1}}  \sum_{\substack{i_1,\dots, i_q = 1 \\ \ii_1,\dots, \ii_{q-1} = 1}}^n
\kappa(x_{\ii_1}\odot\cdots \odot x_{\ii_{q-1}} \odot x 
\odot \overline{x_{i_1}}\odot\cdots \odot\overline{x_{i_q}})
u_{i_1} \cdots u_{i_{q}} \overline{u_{\ii_1}} \cdots \overline{u_{\ii_{q-1}}}.
\end{equation}

We now treat two special cases of power series tensor-kernels
built from a power series of one complex variable.
Let $(\gamma_k)_{k \in \N} \in \RP^\N$ and
suppose that the power series 
\begin{equation}
\label{eq:20170313b}
\sum_{k \in \N} \gamma_k \zeta^k\quad (\zeta \in \C)
\end{equation}
has
radius of convergence $R_\gamma>0$ ($R_\gamma = 1/ \limsup_{k} \gamma_k^{1/k}>0$).
We denote by $D(R_\gamma) = \{\zeta \in \C~\vert~ \abs{\zeta}<R_\gamma\}$ and
 by $\psi \colon D(R_\gamma) \to \R$ respectively the disk of convergence
and the sum of the power series \eqref{eq:20170313b}.
\begin{description}
\item[Case 1.]
We set
\begin{equation}
\label{eq:case1C}
(\forall\, \nu \in \N^d)\qquad  \rho_\nu = \gamma_{\abs{\nu}}\binom{\abs{\nu}}{\nu} = \gamma_{\abs{\nu}} \frac{ \abs{\nu}!}{\nu_1!\cdots \nu_d!}.
\end{equation}
Then,
 the domain of absolute convergence of the series 
 $\sum_{\nu \in \N^d} \rho_\nu z^\nu$
 is the strip 
 \begin{equation*}
\XD_{\rho} = \bigg\{ z \in \C^d~\Big\vert~\bigg\lvert \sum_{t=1}^d z_t \bigg\rvert < R_\gamma\bigg\}
\end{equation*}
and,
 it follows from the multinomial theorem \cite[Theorem~4.12]{Bona11} that,
 for every $z \in \XD_\rho$,
 \begin{equation}
\label{eq:20160217c}
\kappa(z) = \sum_{\nu \in \N^d} \rho_\nu z^\nu
=\sum_{k \in \N} \gamma_k 
\sum_{\substack{\nu \in \N^d\\ \abs{\nu} = k}} \frac{k!}{\nu_1! \cdots \nu_d!} 
z^{\nu}
= \sum_{k \in \N} \gamma_k
\bigg( \sum_{t=1}^d z_t \bigg)^k
= \psi\bigg( \sum_{t=1}^d z_t\bigg).
\end{equation}
Note also that $\XD_{\rho}^{\odot 1/m} = \{z \in \C^d~\vert~\norm{z}_m^m<R_\gamma\}$.
Thus, it follows from \eqref{eq:20160301f} that
\begin{align}
\label{eq:20160301j}
K(x^\prime_1,\dots, x^\prime_q;
x^{\prime\prime}_1,\dots, x^{\prime\prime}_q)
\nonumber &= 
\kappa(x^\prime_1\odot \cdots \odot x^\prime_q \odot 
\overline{x^{\prime\prime}_1}\odot \cdots \odot \overline{x^{\prime\prime}_q})\\
&= \psi \bigg(\sum_{t=1}^d x^\prime_{1,t}  \cdots x^\prime_{q,t}
\overline{x^{\prime\prime}_{1,t} }
\cdots \overline{x^{\prime\prime}_{q,t}} \bigg),
\end{align}
for every $(x^\prime_1,\dots, x^\prime_q, 
x^{\prime\prime}_1,\dots, x^{\prime\prime}_q) \in \XC^q\times \XC^q$.
For $q=1$, the right hand side of 
\eqref{eq:20160301j} reduces to
\begin{equation*}
K(x^\prime,x^{\prime\prime}) =
 \psi(\scal{x^\prime}{x^{\prime\prime}}) = \sum_{k \in \N} \gamma_k 
{\scal{x^\prime}{x^{\prime\prime}}}^k,
\end{equation*}
where $\scal{\cdot}{\cdot}$ is the Euclidean scalar product in $\R^d$.
These kind of kernels have been also called \emph{Taylor kernels} in \cite{SteChi2008}.
Thus, in virtue of \eqref{eq:20160301j}, \eqref{eq:20160301b} takes the form
\begin{equation*}
\begin{aligned}
u \in \C^n \mapsto \bigg\lVert \sum_{i=1}^n u_i \Phi(x_i) \bigg\rVert^{r^*}_{r^*} 
&= \sum_{\substack{\alpha \in \N^n, \beta \in \N^n\\ \abs{\alpha}=q, \abs{\beta}=q}} 
\binom{q}{\alpha}\binom{q}{\beta}
\psi\bigg( \sum_{t = 1}^d \overline{x}_{1,t}^{\alpha_1} \cdots \overline{x}_{n,t}^{\alpha_n}
x_{1,t}^{\beta_1} \cdots x_{n,t}^{\beta_n}\bigg) u^\alpha \overline{u}^{\beta}\\
&= \sum_{\substack{\alpha \in \N^n, \beta \in \N^n\\ \abs{\alpha}=q, \abs{\beta}=q}} 
\binom{q}{\alpha}\binom{q}{\beta}
\psi\bigg( \sum_{t = 1}^d (\overline{x}_{\cdot,t})^{\alpha} (x_{\cdot,t})^{\beta}\bigg) 
u^\alpha \overline{u}^\beta,
\end{aligned}
\end{equation*}
where we put, for every $t \in \{1,\dots, d\}$,
$x_{\cdot,t} = ( x_{1,t},  \dots x_{n,t}) \in \C^n$.\footnote{
If we consider the matrix of the data 
$\XX = (x_{i,t})_{\substack{1 \leq i \leq n\\1 \leq t \leq d}} \in \C^{n\times d}$,
having the training set $(x_i)_{1 \leq i \leq n}$ as rows, the vectors $x_{\cdot,t}$
are the columns of $\XX$.} 
The representation formula \eqref{eq:20160301c} turns to
\begin{equation*}
\begin{aligned}
\pair{w}{\Phi(x)}_{r,r^*}  
&= \frac{1}{n^{m-1}}  \sum_{\substack{i_1,\dots, i_{q} = 1\\ \ii_1,\dots, \ii_{q-1} = 1}}^n  
\psi \bigg(\sum_{t=1}^d \overline{x_{i_1,t}} \cdots \overline{x_{i_{q},t}} 
x_{\ii_1,t} \cdots x_{\ii_{q-1},t} x_t \bigg) 
u_{i_1} \cdots u_{i_{q}}
\overline{u_{\ii_1}} \cdots \overline{u_{\ii_{q-1}}}.
\end{aligned}
\end{equation*}

\item[Case 2.] 
We set
\begin{equation}
\label{eq:case2C}
(\forall\, \nu \in \N^d)\qquad \rho_\nu = \prod_{t=1}^d \gamma_{\nu_t}.
\end{equation}
Then
 the domain of absolute convergence of the series 
 $\sum_{\nu \in \N^d} \rho_\nu z^\nu$ is
 \begin{equation*}
\XD_\rho = \bigg\{ z \in \C^d~\Big\vert~(\forall\, t \in \{1,\dots, d\})\ \abs{z_t}<R_\gamma\bigg\}
\end{equation*}
 and
\begin{equation*}
(\forall\, z \in \XD_\rho)\quad \kappa(z) =
\sum_{\nu \in \N^d} \rho_\nu z^\nu = 
\sum_{\nu \in \N^d} \prod_{t=1}^d \gamma_{\nu_j} z_t^{\nu_t} 
= \prod_{t=1}^d \sum_{k \in \N} \gamma_k z_t^k = \prod_{t=1}^d \psi(z_t).
\end{equation*}
In this case $\XD_{\rho}^{\odot 1/m} 
= \{z \in \C^d~\vert~(\forall\, t \in \{1,\dots, d\})\abs{z_t}<R^{1/m}_\gamma\}$ and
 \eqref{eq:20160301f} becomes,
\begin{align}
\label{eq:20160217a}
K(x^\prime_1,\dots, x^\prime_q;
x^{\prime\prime}_1,\dots, x^{\prime\prime}_q)
\nonumber &= 
\kappa(x^\prime_1\odot \cdots \odot x^\prime_q \odot 
\overline{x^{\prime\prime}_1}\odot \cdots \odot \overline{x^{\prime\prime}_q})\\
&= \prod_{t=1}^d \psi \bigg( x^\prime_{1,t}  \cdots x^\prime_{q,t}
\overline{x^{\prime\prime}_{1,t}}
\cdots \overline{x^{\prime\prime}_{q,t}} \bigg),
\end{align}
for every $(x^\prime_1,\dots, x^\prime_q, 
x^{\prime\prime}_1,\dots, x^{\prime\prime}_q) \in \XC^q\times \XC^q$.
Thus, as done before, relying on \eqref{eq:20160217a} we can obtain the
corresponding expression for the homogeneous polynomial form \eqref{eq:20160301b} 
\begin{equation}
\label{eq:20160217e}
u \in \C^n \mapsto \bigg\lVert \sum_{i=1}^n u_i \Phi(x_i) \bigg\rVert^{r^*}_{r^*} 
= \sum_{\substack{\alpha \in \N^n, \beta \in \N^n\\ \abs{\alpha}=q, \abs{\beta}=q}} 
\binom{q}{\alpha}\binom{q}{\beta} \prod_{t=1}^d
\psi\big(  \overline{x}_{1,t}^{\alpha_1} \cdots \overline{x}_{n,t}^{\alpha_n}
x_{1,t}^{\beta_1} \cdots x_{n,t}^{\beta_n}\big) u^\alpha \overline{u}^\beta
\end{equation}
and the representation formula \eqref{eq:20160301c},
\begin{equation}
\label{eq:20160217f}
\pair{w}{\Phi(x)}_{r,r^*} 
= \frac{1}{n^{m-1}}  \sum_{\substack{i_1,\dots, i_{q} = 1 \\ \ii_1,\dots, \ii_{q-1} = 1}}^n
\prod_{t=1}^d \psi( x_{{\ii_1},t}\cdots x_{{\ii_{q-1}},t} x_t 
\overline{x_{{i_1},t}}\cdots \overline{x_{{i_{q},t}}} )
u_{i_1} \cdots u_{i_{q}} \overline{u_{\ii_1}} \cdots \overline{u_{\ii_{q-1}}}.
\end{equation}
\end{description}

\begin{example} We list significant examples of power series 
tensor kernels and for each one we provide the corresponding representation formulas.
\begin{enumerate}[$(i)$]
\item In \eqref{eq:case2C} set $(\gamma_k)_{k \in \N}\equiv 1$,
hence $(\rho_\nu)_{\nu \in \N^d}\equiv 1$ too. Then $R_\gamma = 1$ and
$\psi(\zeta) = 1/(1 - \zeta)$. Therefore, relying on \eqref{eq:20160217a}, we obtain
the tensor-\emph{Szeg\"o} kernel 
\begin{equation*}
K(x^\prime_1, \dots, x^\prime_q; x^{\prime\prime}_1, \dots, x^{\prime\prime}_q) 
= \frac{1}{\prod_{t=1}^d (1 - x^\prime_{1,t}  \cdots x^\prime_{q,t}
\overline{x^{\prime\prime}_{1,t}} 
\cdots \overline{x^{\prime\prime}_{q,t}})}.
\end{equation*}
This kernel generates a reproducing kernel Banach space of 
multi-variable analytic functions \cite{Paul,You01}
\begin{equation*}
B_{\rho, 0}^r(\XC) = \bigg\{ f \in \C^\XC \,\bigg\vert\, 
(\exists\, (c_\nu)_{\nu \in {\N}^d} \in \ell^r(\N^d;\C))(\forall\, x \in \XC) 
\Big(f(x) = \sum_{\nu \in \N^d} 
 c_\nu  x^\nu \Big) \bigg\}
\end{equation*}
with norm
$\norm{f}_{B_{\rho,b}^r(\XC)} =  \norm{c}_r$,
where ${(c_\nu)}_{\nu \in \N^d} \in 
\ell^r(\N^d;\C)$ is such that $f=\sum_{\nu\in \N^d}  c_\nu  x^{\nu}$ (pointwise).
This space reduces to the Hardy space when $r=2$.
Moreover, \eqref{eq:20160217e} yields the following homogenous polynomial form
\begin{equation*}
u \in \C^n \mapsto \bigg\lVert \sum_{i=1}^n u_i \Phi(x_i) \bigg\rVert^{r^*}_{r^*} =
\sum_{\substack{\alpha \in \N^n, \beta \in \N^n\\ \abs{\alpha}=q, \abs{\beta}=q}} 
\binom{q}{\alpha}\binom{q}{\beta}
\frac{u^\alpha \overline{u}^\beta}
{\prod_{t=1}^d (1 - (\overline{x}_{\cdot, t})^\alpha (x_{\cdot, t})^\beta)}.
\end{equation*}
Finally, in view of \eqref{eq:20160217f}, we have the following tensor-kernel representation
\begin{equation*}
\pair{w}{\Phi(x)}_{r,r^*}  = \frac{1}{n^{m-1}}
\sum_{\substack{i_1,\dots, i_{q} = 1\\ \ii_1,\dots, \ii_{q-1} = 1}}^n  
\frac{u_{i_1} \cdots u_{i_{q}}
\overline{u_{\ii_1}} \cdots \overline{u_{\ii_{q-1}}}}
{\prod_{t=1}^d (1 -  x_{\ii_1,t} \cdots x_{\ii_{q-1},t} x_t, 
\overline{x}_{i_1,t} \cdots \overline{x}_{i_q,t})}.
\end{equation*}
\item Set $(\gamma_k)_{k \in \N}\equiv {((k+1)/\pi)}_{k \in \N}$ in \eqref{eq:case2C}.
Then $R_\gamma = 1$ and
$\psi(\zeta) = 1/(\pi(1 - \zeta)^2)$. We then obtain the following Taylor type tensor kernel
\begin{equation*}
K(x^\prime_1, \dots, x^\prime_q; x^{\prime\prime}_1, \dots, x^{\prime\prime}_q) 
= \frac{1}{ \pi^d \prod_{t=1}^d {(1 - x^\prime_{1,t}  \cdots x^\prime_{q,t}
\overline{x^{\prime\prime}_{1,t}} 
\cdots \overline{x^{\prime\prime}_{q,t}})}^2}.
\end{equation*}
This kernel gives rise to a
reproducing kernel Banach space of analytic functions
which reduces to the Bergman space when $m=2$. 
Proceeding as in the previous point, the expression of the corresponding
 homogeneous polynomial form and the
representation formula can be obtained.
\item Let
$(\gamma_k)_{k \in \N} = {\big(1/k!\big)}_{k \in \N}$ in
\eqref{eq:case2C}.
Then $R_\gamma=+\infty$ and $\psi(\zeta) = e^\zeta$. Hence, by \eqref{eq:20160217a},
\begin{equation*}
K(x^\prime_1,\dots, x^\prime_q;
x^{\prime\prime}_1,\dots, x^{\prime\prime}_q) 
= \prod_{t=1}^d e^{x^\prime_{1,t}\cdots x^\prime_{q,t}
\overline{x^{\prime\prime}_{1,t}}\cdots \overline{x^{\prime\prime}_{q,t}}},
\end{equation*}
which is the tensor-\emph{exponential kernel} and
the  form \eqref{eq:20160217e} becomes
\begin{equation*}
u \in \C^n \mapsto \bigg\lVert \sum_{i=1}^n u_i \Phi(x_i) \bigg\rVert^{r^*}_{r^*} =
\sum_{\substack{\alpha \in \N^n, \beta \in \N^n\\ \abs{\alpha}=q, \abs{\beta}=q}} 
\binom{q}{\alpha}\binom{q}{\beta}e^{\sum_{j =1}^d (\overline{x}_{\cdot, j})^\alpha 
(x_{\cdot, j})^\beta} 
u^\alpha \overline{u}^\beta.
\end{equation*}
The corresponding tensor
representation is
\begin{equation*}
\pair{w}{\Phi(x)}_{r,r^*} = \frac{1}{n^{m-1}}
\sum_{\substack{i_1,\dots, i_{q} = 1\\ \ii_1,\dots, \ii_{q-1} = 1}}^n  
\prod_{t=1}^d e^{x_{i_1,t} \cdots x_{i_{q-1},t} x_t, 
\overline{x}_{\ii_1,t} \cdots \overline{x}_{\ii_q,t}}.
\end{equation*}
\item Let $\alpha>0$, set 
\begin{equation*}
(\forall\, k \in \N) \qquad \gamma_k = \binom{-\alpha}{k}(-1)^k = 
\prod_{i=1}^k\frac{\alpha + i - 1}{i}>0,
\end{equation*}
and define $(\rho_\nu)_{\nu \in \N^d}$ according to \eqref{eq:case1C}.
Then $R_\gamma = 1$ and $\psi(z) = (1 - \zeta)^{-\alpha}$ and formula
\eqref{eq:20160301j} yields the
following tensorial version of the \emph{binomial kernel} \cite{SteChi2008}
\begin{equation*}
K(x^\prime_1,\dots, x^\prime_q;
x^{\prime\prime}_1,\dots, x^{\prime\prime}_q) 
=  \frac{1}{
 \Big(1 - \sum_{t=1}^d x^\prime_{1,t}  \cdots x^\prime_{q,t}
\overline{x^{\prime\prime}_{1,t} }
\cdots \overline{x^{\prime\prime}_{q,t}} \Big)^{\alpha}}.
\end{equation*}
\item Let $s \in \N$, set
\begin{equation*}
(\forall\, k \in \N)\qquad \gamma_k =
\begin{cases}
\dbinom{s}{k} &\text{if}\ k \leq s\\[2ex]
0&\text{if}\ k>s,
\end{cases}
\end{equation*}
and define $(\rho_\nu)_{\nu \in \N^d}$ according to \eqref{eq:case1C}.
Then $R_\gamma = +\infty$
and $\psi (\zeta) = (1 + \zeta)^s$. This way, by \eqref{eq:20160301j}, we have
\begin{equation*}
K(x^\prime_1,\dots, x^\prime_q;
x^{\prime\prime}_1,\dots, x^{\prime\prime}_q) 
=  
 \Big(1 + \sum_{t=1}^d x^\prime_{1,t}  \cdots x^\prime_{q,t}
\overline{x^{\prime\prime}_{1,t} }
\cdots \overline{x^{\prime\prime}_{q,t}} \Big)^{s},
\end{equation*}
which is the \emph{polynomial tensor-kernel} of order $s$.
By \eqref{eq:case1C} we have that $\rho_\nu > 0$ if $\abs{\nu}\leq s$
and $\rho_\nu = 0$ if $\abs{\nu}>s$. Therefore, recalling \eqref{eq:20160303b}, we have that 
\begin{equation*}
B_{\rho, 0}^r(\XC) = \bigg\{ f \in \C^\XC \,\bigg\vert\, 
(\exists\, (c_\nu)_{\nu \in {\N}^d} \in \ell^r(\N^d;\C))(\forall\, x \in \XC) 
\Big(f(x) = \sum_{\nu \in \N^d} 
 c_\nu  \phi_\nu(x) \Big) \bigg\},
\end{equation*}
is the space
of polynomials in $d$ variables with coefficients in $\C$ of degree up to $s$.
\end{enumerate}
\end{example}

\section{Conclusion}
\label{sec:con}

In this work 
we first provided a complete duality theory for 
support vector regression in Banach function spaces with general regularizers.
Then, we specialized the analysis to reproducing kernel Banach spaces
that admit a representation in terms of a (countable) dictionary of functions
with $\ell^{r}$-summable coefficients and regularization terms of type 
$\varphi(\norm{\cdot}_r)$, being $r=m/(m-1)$ and $m$ an even integer.
In this context we showed that the problem of support vector regression can be  
explicitly solved through the introduction of a new type of kernel of tensorial type 
(with degree $m$)
which completely encodes the finite dimensional dual problem as well as the representation
of the corresponding infinite dimensional primal solution (the regression function).
This can provide a new and effective computational framework for solving 
support vector
regression in Banach space setting. We finally study
a whole class of reproducing kernel Banach spaces of analytic functions to which
the theory applies and show significant examples which can become useful in applications.

\hspace{2ex}

{\bf Acknowledgments.} {\small The research leading to these results has received funding from the European Research Council (FP7/2007--2013) / ERC AdG A-DATADRIVE-B (290923) under the European Union's Seventh Framework Programme. This paper reflects only the authors' views, the Union is not liable for any use that may be made of the contained information; Research Council KUL: GOA/10/09 MaNet, CoE PFV/10/002 (OPTEC), BIL12/11T; PhD/Postdoc grants; Flemish Government: FWO: PhD/Postdoc grants, projects: G.0377.12 (Structured systems), G.088114N (Tensor based data similarity); IWT: PhD/Postdoc grants, projects: SBO POM (100031); iMinds Medical Information Technologies SBO 2014; Belgian Federal Science Policy Office: IUAP P7/19 (DYSCO, Dynamical systems, control and optimization, 2012--2017).}

\appendix
\section{Proofs of section~\ref{sec:GenSVR}}

\begin{proof}[proof of Theorem~\ref{thm:pridu}]
The Banach spaces $\displaystyle L^p(P)$ and $L^{p^*}(P)$ are put in duality by means
of the pairing
\begin{equation}
\pair{\cdot}{\cdot}_{p,p^*}\colon L^p(P)\times L^{p^*}(P) \to \R
\colon (e,u) \mapsto \int_{\XC\times\YC} e(x,y) u(x,y) \ud P(x,y).
\end{equation}
In virtue of {\bf A3}, the following linear operator
\begin{equation}
A\colon \FF\times \R \to L^p(P)\ \text{s.t.}\ 
(\forall\, (w,b) \in \FF\times\R)\ 
A(w,b)\colon \XC\times\YC \to \R\colon (x,y) \mapsto \pair{w}{\Phi(x)} + b
\end{equation}
is well-defined and the function
\begin{equation*}
\mathrm{pr}_2\colon \XC\times\YC\to \R \colon (x,y) \mapsto y,
\end{equation*}
is in $\displaystyle L^{p}(P)$. 
Then problem \eqref{eq:mainprob1} can be written in the following
constrained form
\begin{equation}
\label{eq:mainprob2}
\left[
\begin{aligned}
&\min_{(w,b,e) \in \FF\times\R \times L^p(P) }
\gamma \int_{\XC\times\YC} \loss(e(x,y))\ud P(x,y) + G(w),\\[1ex]
&\text{subject to}\ 
\mathrm{pr}_2 - A (w,b) = e
\end{aligned}
\right.
\end{equation}
--- where, in the constraint, the equality is meant to be in $L^p(\PP)$ ---
and hence $(\mathcal{P})$ follows.
Now, define the following integral functional
\begin{equation*}
R_{\PP} \colon L^p(P) \to \R \colon e \mapsto \int_{\XC \times\YC} \loss(e(x,y))\ud P(x,y),
\end{equation*}
the linear operator
\begin{equation*}
B\colon \FF\times \R \times L^p(\PP) \to L^p(\PP)\colon (w,b,e)\mapsto A (w,b) + e,
\end{equation*}
and the functional
\begin{equation*}
F\colon \FF\times\R \times L^p(\PP) \to \RX\colon (w,b,e)\mapsto \gamma R_\PP(e) + G(w).
\end{equation*}
We note that the functional $R_\PP$ is well defined, convex, and continuous. This follows from
from the convexity and continuity of $\loss$ and from 
the fact that, because of {\bf A1},
for every $(x,y) \in \XC\times\YC$,
 $\loss(e(x,y)) \leq a  + b \abs{e(x,y)}^p$.
Then, problem \eqref{eq:mainprob2} can be equivalently written as
\begin{equation}
\label{eq:mainprob4}
\min_{(w,b,e) \in \FF \times \R\times  L^p(\PP)} F(w,b,e) + \iota_{\{-\mathrm{pr}_2 \}} (-B(w,b,e)).
\end{equation}
This form of problem \eqref{eq:mainprob1} is amenable by the 
Fenchel-Rockafellar duality theory.
In view of Fact~\ref{fact:FRduality} we need only to check that
$0 \in \mathrm{int}\big(-B(\dom F) + \dom \iota_{\{-\mathrm{pr}_2\}} \big)$. This is 
almost immediate. 
Indeed, since 
$\dom F = \dom G \times\R\times L^p(\PP)$, we have
\begin{equation*}
B(\dom F) = \big\{ A (w,b)+ e \,\big\vert\, (w,b) 
\in \dom G\times\R\ \text{and}\ e \in L^p(\PP) \big\} = L^p(\PP).
\end{equation*}
Now we compute the dual of \eqref{eq:mainprob4}. We have
\begin{equation}
\label{eq:g*}
(\forall\, u \in L^{p^*}(P))\qquad (\iota_{\{-\mathrm{pr}_2\}})^*(u) 
= \pair{-\mathrm{pr}_2}{u}_{p,p^*}
\end{equation}
and, for every $(w^*,b^*, u) \in \FF^*\!\times \R\times L^{p^*}(\PP)$,
\begin{equation}
\label{eq:f*}
\begin{aligned}
F^*(w^*,b^*,u) &= \sup_{(w,b,e) \in \FF\times\R\times L^p(P)} 
\pair{(w,b,e)}{(w^*,b^*,u)} - F(w,b,e)\\[1ex]
&= \sup_{w \in \FF} \sup_{b \in \R} \sup_{e \in L^p(\PP)} \pair{w}{w^*} - G(w)
+ \pair{u}{e}_{p,p^*} - \gamma R_\PP(e) + b b^*\\[1ex]
&= 
\begin{cases}
G^*(w^*) + \gamma R_{\PP}^*(u/\gamma) &\text{if } b^* =0\\
+\infty &\text{if } b^* \neq 0.
\end{cases}
\end{aligned}
\end{equation}
Moreover, we need also to compute 
$\displaystyle A^*\colon L^{p^*}(\PP) \to \FF^*\!\times \R$ and
$\displaystyle B^*\colon L^{p^*}(\PP) \to \FF^*\!\times \R \times L^{p^*}(\PP)$.
To that purpose, we note that
for every $\displaystyle (w,b,e) \in \FF\times\R \times L^p(\PP)$ and every 
$\displaystyle u \in L^{p^*}(\PP)$,
\begin{equation*}
\begin{aligned}
\pair{B(w,b,e)}{u}_{p,p^*} &= \pair{A (w,b) + e}{u}_{p,p^*} 
= \pair{(w,b)}{A^* u} + \pair{e}{u}_{p,p^*}\\
&= \pair{(w,b,e)}{(A^* u, u)}
\end{aligned}
\end{equation*}
and
\begin{equation*}
\begin{aligned}
\pair{(w,b)}{A^* u} &= \pair{A (w,b)}{u}_{p,p^*} \\[1ex]
&= \int_{\XC\times\YC} (\pair{w}{\Phi(x)} + b) u(x,y) \ud P(x,y)\\
&= \Big\langle w, \int_{\XC\times\YC} u(x,y) \Phi(x)  \ud P(x,y) \Big\rangle
+ b \int_{\XC\times\YC} u\ud P\\
&=\Big\langle (w,b), \bigg( \int_{\XC\times\YC} u(x,y) \Phi(x)  \ud P(x,y), 
\int_{\XC\times\YC} u \ud P \bigg)\Big\rangle,
\end{aligned}
\end{equation*}
which yields
\begin{equation}
\label{eq:A*}
A^* u = \bigg( \int_{\XC\times\YC} u \Phi  \ud P, 
\int_{\XC\times\YC} u \ud P \bigg)
\end{equation}
and
\begin{equation}
\label{eq:B*}
B^* u = (A^* u, u) = 
\bigg( \int_{\XC\times\YC} u \Phi  \ud P, 
\int_{\XC\times\YC} u \ud P,\, u \bigg),
\end{equation}
where, for brevity, we put $\int_{\XC\times\YC} u \Phi \ud \PP
= \int_{\XC\times\YC} u(x,y) \Phi(x) \ud \PP(x,y)$.
Thus, taking into account \eqref{eq:f*},\eqref{eq:A*}, and \eqref{eq:B*}, we have that,
for every $u \in L^{p^*}(\PP)$,
\begin{equation*}
F^*(B^* u) = F^* (A^* u, u) 
=
\begin{cases} 
\displaystyle
G^*\bigg( \int_{\XC\times\YC} u\Phi \ud P \bigg) 
+ \gamma R^*_\PP(u/\gamma) &\text{if }
\displaystyle \int_{\XC\times\YC} u \ud P = 0\\[2ex]
+\infty &\text{otherwise.}
\end{cases}
\end{equation*}
Moreover, it follows from \cite[Theorem~21(a)]{Rock74} that
the Fenchel conjugate of $R_\PP$ is still an integral operator, more precisely
\begin{equation*}
(\forall\, u \in L^p(\PP))\qquad R^*_\PP(u/\gamma) = \int_{\XC \times \YC} L^*(u(x,y)/\gamma) \ud \PP(x,y).
\end{equation*}
Therefore,  recalling \eqref{eq:g*}, the final form $(\mathcal{D})$ is obtained.
The corresponding optimality conditions for problem \eqref{eq:mainprob4}
and its dual $(\mathcal{D})$ are (see Fact~\ref{fact1})
\begin{equation}
\label{eq:optSVM0}
B^* u \in \partial F(w,b, e) = \partial G(w) \times\{0\} \times \gamma \partial R(e)
\quad\text{and}\quad B(w,b, e) = \mathrm{pr}_2.
\end{equation}
Now, recalling \eqref{eq:B*},
conditions \eqref{eq:optSVM0}
can be gathered together as follows
\begin{equation}
\label{eq:optSVM}
\begin{cases}
\displaystyle \int_{\XC\times\YC} u\Phi \ud \PP \in \partial G(w)\\[2ex]
\displaystyle \int_{\XC\times\YC} u \ud \PP = 0\\[2.5ex]
\dfrac{u}{\gamma} \in   \partial R(e)\\[2ex]
y - \pair{w}{\Phi(x)} - b  = e(x,y)
\quad \text{for $\PP$-a.a.}\ (x,y) \in \XC\times\YC.
\end{cases}
\end{equation}
Thus, subdifferentiating under the integral sign \cite[Theorem~21(c)]{Rock74}
and recalling \eqref{eq:20160302a}, \eqref{eq:optPD} follows.
\end{proof}

\begin{proof}[proof of Corollary~\ref{cor:20160223m}]
Let $t>0$. We first note that, since $0$ is the unique minimizer of $\varphi$
and $t>0$, then $0 \notin \partial\varphi(t)$; moreover,
for every $\xi \in \partial\varphi(t)$, we have $\xi t \geq \varphi(t) - \varphi(0)>0$,
hence,  $\xi>0$. Now, if $w=0$, then \eqref{eq:20160223l} holds
trivially. Suppose that $w\neq 0$. Then, it follows from Fact~\ref{fact1} that
\begin{equation*}
\partial G(w) =
\frac{\partial \varphi(\norm{w})}{\norm{w}^{r-1}} J_r(w).
\end{equation*}
Therefore,
 it follows from the first of \eqref{eq:optPD} and 
\eqref{eq:20160302a} that
\begin{equation*}
\int_{\XC\times\YC} u\Phi \ud \PP = \frac{\xi}{\norm{w}^{r-1}} J_r(w),
\qquad \xi \in \partial \varphi(\norm{w}).
\end{equation*}
Hence, since $\xi > 0$,
\begin{equation*}
J_r(w) = \frac{\norm{w}^{r-1}}{\xi} \int_{\XC\times\YC} u\Phi \ud \PP
\end{equation*}
and the statement follows.
\end{proof}

\end{document}